\newtheorem{theorem}{Theorem}[section]
\newtheorem{lemma}[theorem]{Lemma}
\newtheorem{corollary}[theorem]{Corollary}
\theoremstyle{remark}
\newtheorem{remark}[theorem]{Remark}
\begin{document}
\def\Date{July 30, 2014}  
\allowdisplaybreaks[4]
\numberwithin{equation}{section} \numberwithin{figure}{section}
\numberwithin{table}{section}
\def\LTS{{L_2(\sigma)}}
\def\O{\Omega}
\def\cT{\mathcal{T}}
\def\cB{\mathcal{B}}
\def\cE{\mathcal{E}}
\def\cF{\mathcal{F}}
\def\cI{\mathcal{I}}
\def\p{\partial}
\def\R{\mathbb{R}}
\def\bu{\bm{u}}
\def\bg{\bm{g}}
\def\bv{\bm{v}}
\def\bn{\bm{n}}
\def\bK{{\bm A}}
\def\bA{\bm{A}}
\def\bzeta{\bm{\zeta}}
\def\bbeta{\bm{\beta}}
\def\bxi{\bm{\xi}}
\def\bw{\bm{w}}
\def\bb{\bm{b}}
\def\div{\nabla\cdot}
\def\HDiv{{H(\text{div};\O)}}
\def\LT{{L_2(\O)}}
\def\ES{\HDiv\times\LT}
\def\ESA{[\LT]^d\times H^1_0(\O)}
\def\ESh{V_h\times Q_h}
\def\ESk{V_k\times Q_k}
\def\kES{V_k\times Q_k}
\def\PihRT{\Pi_h^{\rm RT}}
\def\hsa{h_{\scriptscriptstyle\sigma}}
\def\jump#1{[\hspace{-1.5pt}[#1]\hspace{-1.5pt}]_\sigma}
\def\mean#1{\{\hspace{-3pt}\{#1\}\hspace{-3pt}\}_\sigma}
\def\tW{\tilde W}
\def\tQ{\tilde Q}
\def\tq{\tilde q}
\def\tbl{(\!(\!(}
\def\tbr{)\!)\!)}
\def\tbb#1#2{\tbl #1,#2\tbr}
\def\tbar{|\!|\!|}
\def\Sh{\mathfrak{S}_h}
\def\fSk{\mathfrak{S}_k}
\def\Sk{\mathbb{S}_k}
\def\SSum{\sum_{\sigma\in\Sh}}
\def\SSumk{\sum_{\sigma\in\mathfrak{S}_k}}
\def\SiSumk{\sum_{\sigma\in\mathfrak{S}_k^i}}
\def\tbns#1{\tbar#1\tbar_{s,k}}
\def\dhNorm#1{\|#1\|_{d_h}}
\def\dkNorm#1{\|#1\|_{H^1(\O;\cT_k)}}
\def\bns{\bm{n}_\sigma}
\def\Bh{\mathbb{B}_h}
\def\Bk{\mathbb{B}_k}
\def\PH{{H^1(\O;\cT_h)}}
\def\PHk{{H^1(\O;\cT_k)}}
\def\PHkm{{H^1(\O;\cT_{k-1})}}
\def\PLT{{L_2(\O;\cT_h)}}
\def\PLTk{{L_2(\O;\cT_k)}}
\def\LTT{{L_2(T)}}
\def\PiT{\Xi_{T,\ell-1}}
\def\sss{\scriptscriptstyle}
\def\TSum{\sum_{T\in\cT_h}}
\def\TkSum{\sum_{T\in\cT_k}}
\def\db#1#2{(\!(#1,#2)\!)_k}
\def\hT{h_{\sss T}}
\def\FE#1{{V_{#1}\times Q_{#1}}}
\def\vq{(\bv,q)}
\def\wr{(\bw,r)}
\def\up{(\bu,p)}
\def\uph{(\bu_h,p_h)}
\def\gz{{(\bg,z)}}
\def\BSB{{\Bk\Sk\Bk}}
\def\SBB{{\Sk\Bk^2}}
\def\CGC{Id_k-I_{k-1}^kP_k^{k-1}}
\def\CGCT{Id_k-I_{k-1}^k\tP_k^{k-1}}
\def\tR{{\tilde R}}
\def\tS{{\tilde S}}
\def\tP{{\tilde P}}
\def\tE{{\tilde E}}
\title[Multigrid Methods for Darcy Systems]
{Multigrid Methods for Saddle Point Problems:\\
  Darcy Systems}
\author[S.C. Brenner]{Susanne C. Brenner}
\address{Susanne C. Brenner, Department of Mathematics and Center for
Computation and Technology, Louisiana State University, Baton Rouge,
LA 70803} \email{brenner@math.lsu.edu}
\author[D.-S. Oh]{Duk-Soon Oh} \address{Duk-Soon Oh,
 Department of Mathematics and Center for Computation and Technology,
 Louisiana State University, Baton Rouge, LA 70803}
\email{duksoon@cct.lsu.edu}
\author[L.-Y. Sung]{Li-yeng Sung} \address{Li-yeng Sung,
 Department of Mathematics and Center for Computation and Technology,
 Louisiana State University, Baton Rouge, LA 70803}
\email{sung@math.lsu.edu}
\begin{abstract}
 We design and analyze multigrid methods for the saddle point problems resulting from
 Raviart-Thomas-N\'ed\'elec mixed finite element methods (of order at least 1) for the Darcy system in porous
 media flow.  Uniform convergence of the $W$-cycle algorithm in a nonstandard energy norm is established.
 Extensions to general second order elliptic problems are also addressed.
\end{abstract}
\subjclass{65N55, 65N30, 65N15}
\thanks{The work of the first and third authors was supported in part
 by the National Science Foundation under Grant No.
 DMS-13-19172.}
\keywords{multigrid, saddle point problem, Darcy, second order elliptic problems,
 Raviart-Thomas-N\'ed\'elec mixed finite element methods}
\maketitle
\section{Introduction}\label{sec:Introduction}
 Multigrid methods for saddle point problems arising from mixed finite element methods for
 Stokes and Lam\'e systems were investigated in the recent paper \cite{BLS:2014:StokesLame}, where
 uniform convergence for the $W$-cycle algorithm in the energy norm was established for
 arbitrary polyhedral domains.
 In this paper we will extend the results in \cite{BLS:2014:StokesLame}
 to the Darcy system in porous media flow, and to general second order elliptic problems.
 We will follow the standard notation for differential
 operators and function spaces
 that can be found, for example, in \cite{Ciarlet:1978:FEM,BScott:2008:FEM,BBF:2013:Mixed}.
\par
 Let $\O$ be a polyhedral domain in $\R^d$ ($d=2,3$) occupied by a porous media.
 The velocity $\bu$ and pressure $p$ of a flow in $\O$ that obeys Darcy's law are determined by the
 system of equations
\begin{alignat}{3}
  \bu&=-\bK\nabla p&\qquad&\text{in $\O$},\label{eq:SDarcy1}\\
  \div\bu&=f&\qquad&\text{in $\O$},\label{eq:SDarcy2}\\
\intertext{together with the boundary condition}
  p&=g&\qquad&\text{on $\p\O$}\label{eq:SDarcy3}.
\end{alignat}
 Here $f$ is a source, $g$ is the pressure on $\p\O$, and
 $\bK$, a (sufficiently) smooth
 $d\times d$ symmetric positive definite (SPD) matrix function on $\bar\O$,
  is the permeability tensor divided by the viscosity.
\par
 For the design and analysis of multigrid methods, it suffices to consider the case where $g=0$.
 A standard weak formulation \cite{BBF:2013:Mixed} of \eqref{eq:SDarcy1}--\eqref{eq:SDarcy3} is then
 to find
 $(\bu,p)\in\HDiv\times\LT$ such that
\begin{alignat}{3}
  a(\bu,\bv)+b(\bv,p)&=0&\qquad&\forall\,\bv\in\HDiv,\label{eq:Darcy1}\\
  b(\bu,q)\hspace{48pt}    &=F(q)&\qquad&\forall\,q\in\LT,\label{eq:Darcy2}
\end{alignat}
 where
\begin{equation*}
   a(\bw,\bv)=\int_\O (\bK^{-1}\bw)\cdot\bv\,dx, \quad
   b(\bv,q)=-\int_\O (\div\bv)q\,dx \quad \text{and} \quad  F(q)=-\int_\O fq\,dx.
\end{equation*}
 Note that \eqref{eq:Darcy1}--\eqref{eq:Darcy2} can be written concisely as
\begin{equation}\label{eq:ConciseDarcy}
  \cB\big(\up,\vq)=F(q)\qquad\forall\,\vq\in\ES,
\end{equation}
 where
\begin{equation}\label{eq:BDef}
  \cB\big(\wr,\vq\big)=a(\bw,\bv)+b(\bv,r)+b(\bw,q).
\end{equation}
\par
 Let $\cT_h$ be a simplicial triangulation of $\O$.
  The Raviart-Thomas-N\'ed\'elec finite element method
  \cite{RT:1977:MFM,Nedelec:1980:EdgeElements} for \eqref{eq:Darcy1}--\eqref{eq:Darcy2} is to find
 $(\bu_h,p_h)\in \ESh$ such that
\begin{equation}\label{eq:RT}
  \cB\big(\uph,\vq\big)=F(q)\qquad\forall\,\bv\in \ESh,
\end{equation}
 where $V_h\subset \HDiv$ is the Raviart-Thomas-N\'ed\'elec vector finite element space of order $\ell\geq 1$
 associated with $\cT_h$ and
 $Q_h\subset \LT$ is the space of discontinuous piecewise $P_\ell$ functions.
\par
 We will consider, as in \cite{BLS:2014:StokesLame}, all-at-once multigrid methods that
 compute $\bu$ and $p$ simultaneously.  There is, however, a fundamental difference between the
 saddle point problems for the Stokes and Lam\'e systems considered in \cite{BLS:2014:StokesLame} and
 the saddle point problem \eqref{eq:RT}.
\par
 For the saddle point problems in
 \cite{BLS:2014:StokesLame}, the vector variable belongs to
 $[H^1(\O)]^d$ and the scalar variable belongs to $\LT$, which are
 the correct spaces for the duality argument that appears in the
 proof of the
 approximation properties of the multigrid algorithms.
 For the  saddle point problem \eqref{eq:RT}, the
 vector variable belongs to $\HDiv$ and the scalar variable belongs
 to $\LT$, which are {\em not} the correct spaces for the duality
 argument that is based on elliptic regularity (cf.
 \eqref{eq:EllipticRegularityEst} below).
\par
 This difficulty regarding the saddle point problem defined by
 \eqref{eq:RT} can be remedied by
 treating it as a {\em nonconforming} method for the following alternative weak formulation of the
 Darcy system:
   Find $(\bu,p)\in [\LT]^d\times H^1_0(\O)$ such that
\begin{alignat}{3}
  a(\bu,\bv)+b'(\bv,p)&=0&\qquad&\forall\,\bv\in[\LT]^d,\label{eq:ADarcy1}\\
  b'(\bu,q)\hspace{48pt}    &=F(q)&\qquad&\forall\,q\in H^1_0(\O),\label{eq:ADarcy2}
\end{alignat}
 where
 $$b'(\bv,q)=\int_\O \bv\cdot\nabla q\,dx.$$
\par
 The weak formulation \eqref{eq:ADarcy1}--\eqref{eq:ADarcy2} is well-defined for
 $F\in H^{-s}(\O)$ for $0\leq s\leq 1$, and we have the following
 elliptic regularity estimate (cf. \cite{Grisvard:1985:EPN,Dauge:1988:EBV,NP:1994:EPD}):
\begin{equation}\label{eq:EllipticRegularityEst}
  \|\bu\|_{H^\alpha(\O)}+\|p\|_{H^{1+\alpha}(\O)}\leq
   C_{\O,\bK}\|F\|_{H^{-1+\alpha}(\O)},
\end{equation}
 where $\alpha\in (\frac12,1]$ is determined by $\O$ and $\bK$, and
 $\alpha=1$ if $\O$ is convex. 
\begin{remark}\label{rem:RHS}
 Note that \eqref{eq:RT} is well-defined for $F\in H^{-s}(\O)$ as long as
 $s<\frac12$ since
 $Q_h$ is a subspace of $H^s(\O)$ for any $s<\frac12$.  Moreover
 \eqref{eq:ConciseDarcy} remains valid for the solution $(\bu,p)$ of
 \eqref{eq:ADarcy1}--\eqref{eq:ADarcy2} and for any  $\vq\in \ESh$, provided we use the following
 interpretation of $b(\cdot,\cdot)$:
  $$b(\bv,q)=-\langle \div\bv,q\rangle_{H^{-s}(\O)\times H^{s}(\O)}$$
 where $\langle\cdot,\cdot\rangle_{H^{-s}(\O)\times H^{s}(\O)}$ is the canonical bilinear form on
 ${H^{-s}(\O)\times H^{s}(\O)}$.
\end{remark}
\par
 The weak formulation defined by
 \eqref{eq:ADarcy1}--\eqref{eq:ADarcy2} provides the correct setting
 for a duality argument based on \eqref{eq:EllipticRegularityEst}, which allows us to establish uniform
 convergence for $W$-cycle algorithms for \eqref{eq:RT} in a nonconforming
 energy norm related to \eqref{eq:ADarcy1}--\eqref{eq:ADarcy2}.  This is also the reason that
 we require the order of the Raviart-Thomas-N\'ed\'elec finite element method to be at least 1,
 since piecewise constant functions provide poor approximations of functions in $H^1_0(\O)$
 (cf. Remark~\ref{rem:Order}).
\par
  We note that multigrid algorithms for the lowest order
 Raviart-Thomas-N\'ed\'elec finite element method can be developed through its connection to
 the Crouzeix-Raviart nonconforming $P_1$ finite element method
 \cite{CR:1973:NCPOne,AB:1985:MNC,Brenner:1992:RT}.  There are also other multilevel
 iterative solvers for the Darcy system.
 We refer the readers to
 \cite{RW:1992:Mixed,EW:1994:MLMixed,AEL:1992:Darcy,RVW:1996:Mixed,BGL:2005:SaddlePoint,
 Vassilevski:2008:Book,MW:2011:SaddlePoint}
  for a discussion of such methods.
\par
 The rest of the paper is organized as follows.  We present the nonstandard error analysis for the
 Raviart-Thomas-N\'ed\'elec finite element method in Section~\ref{sec:Error}.  The results in this section
 are important for the convergence analysis of the multigrid methods and also shed new light
 on these finite element methods.  We introduce the multigrid algorithms in Section~\ref{sec:MG}
 and mesh-dependent norms in Section~\ref{sec:MeshDependentNorms}, which are important tools for
 the convergence analysis carried out in Section~\ref{sec:Analysis}.  In
 Section~\ref{sec:General} we extend the results to
 mixed finite element methods for generalized Darcy systems arising from general second order elliptic problems.
 Numerical results are presented in Section~\ref{sec:Numerics}, followed by
 some concluding remarks in Section~\ref{sec:Conclusion}.
\par
 Throughout the paper we will use $C$ (with or without subscripts) to denote a generic positive constant
 that depends only on the domain $\O$, the order $\ell$ of
 the finite element spaces and the shape regularity of the triangulations, but not the mesh sizes.
 To avoid the proliferation of constants, we also use the notation $A\lesssim B$ (or $A\gtrsim B$)
 to represent
 $A\leq\text{(generic constant)}\times B$.  The notation $A\approx B$ is equivalent to
 $A\lesssim B$ and $A\gtrsim B$.

\section{A Nonstandard Error Analysis for Raviart-Thomas-N\'ed\'elec \\ Finite Element Methods}\label{sec:Error}
 In this section
 we will carry out the error analysis of \eqref{eq:RT} as a
 nonconforming finite element method for \eqref{eq:ADarcy1}--\eqref{eq:ADarcy2}. The analysis is based on
 mesh-dependent norms and the saddle point theory of Babu\v ska \cite{Babushka:1973:LM}
  and Brezzi \cite{Brezzi:1974:SPP}.  Similar ideas have been applied to the analysis of
  mixed finite element methods for the biharmonic problem \cite{BOP:1980:Mixed}.
\subsection{Mesh-Dependent Norms for the Finite Element Spaces}\label{subsec:MDN}
 The norm $\|\cdot\|_{\PLT}$ on $[H^\alpha(\O)]^d + V_h$ is defined by
\begin{equation}\label{eq:PLTNorm}
  \|\bv\|_{\PLT}^2=\sum_{T\in\cT_h}\|\bv\|_\LTT^2+\SSum\hsa\|\bv\cdot\bns\|_\LTS^2,
\end{equation}
 where $\Sh$ is the set of the sides (faces for $d=3$ and edges for $d=2$) of the elements in $\cT_h$,
 $\hsa$ is the diameter of the side $\sigma$, and
 $\bns$ is a unit normal of $\sigma$.
\par
 Note that
\begin{equation}\label{eq:NormEquivalence}
  \|\bv\|_\LT\leq \|\bv\|_\PLT\lesssim \|\bv\|_\LT \qquad\forall\,\bv\in V_h.
\end{equation}
\par
 Let $\Pi_h$ be the nodal interpolation operator for the Raviart-Thomas-N\'ed\'elec finite element space
 $V_h$.  It is well-known \cite{Nedelec:1980:EdgeElements,BBF:2013:Mixed} that
\begin{equation}\label{eq:RTInterpolationEst1}
  \|\bzeta-\Pi_h\bzeta\|_\LT\lesssim h^{s}|\bzeta|_{H^s(\O)}\quad\text{for}\quad \frac12<s\leq \ell+1,
\end{equation}
 where $h=\max_{T\in\cT_h}\text{diam}\,T$ is the mesh size.
 We also have, by
 a standard argument based on the Bramble-Hilbert lemma \cite{BH:1970:Lemma,DS:1980:BH},
\begin{equation}\label{eq:RTInterpolationEst2}
  \SSum\hsa\|(\bzeta-\Pi_h\bzeta)\cdot\bns\|_\LTS^2\lesssim
  h^{2s}|\bzeta|_{H^s(\O)}^2\quad\text{for}\quad \frac12<s\leq \ell+1.
\end{equation}
\par
 The norm $\|\cdot\|_\PH$ on $H^1(\O)+Q_h$ is defined by
\begin{equation}\label{eq:PHNorm}
  \|q\|_{\PH}^2=\sum_{T\in\cT_h}\|\nabla q\|_\LTT^2+\SSum\frac{1}{\hsa}\|\jump{q}\|_\LTS^2,
\end{equation}
 where $\jump{q}$ is the jump of $q\in Q_h$ across a side $\sigma\in\Sh$ defined as follows.
\par
 If $\sigma$ is interior to $\O$, then $\sigma$ is the common side of the elements
 $T_\pm$ and
\begin{equation}\label{eq:InteriorJump}
  \jump{q}=q_-\bm{n}_{\sigma,-} + q_+\bm{n}_{\sigma,+},
\end{equation}
 where $q_\pm=q\big|_{T_\pm}$ and $\bm{n}_{\sigma,\pm}$ is the unit normal of $\sigma$ pointing
 towards the outside of $T_\pm$.
\par
 If $\sigma$ is on $\p\O$, then $\sigma$ is the side of a unique element $T_\sigma$ in $\cT_h$ and
\begin{equation}\label{eq:BoundaryJump}
  \jump{q}=q_{\sss T_\sigma}\bm{n}_{\sss T_\sigma},
\end{equation}
 where $q_{\sss T_\sigma}=q\big|_{T_\sigma}$ and $\bm{n}_{\sss T_\sigma}$ is the unit normal of $\sigma$ pointing
 towards the outside of $T_\sigma$.
\par
 The norm $\|\cdot\|_\PH$ is a well-known norm in the analysis of discontinuous Galerkin methods for second
 order problems \cite{ABCM:2001:DG,BScott:2008:FEM}, and we have a standard interpolation error estimate
\begin{equation}\label{eq:DGInterpolationEst}
  \|\phi-\cI_h\phi\|_\PH \lesssim h^{s}|\phi|_{H^{1+s}(\O)}\quad\text{for}\quad
   0<s\leq \ell,
\end{equation}
 where $\cI_h$ is the nodal interpolation operator for the conforming $P_\ell$ Lagrange finite element space.
\begin{remark}\label{rem:Order}
  The estimate \eqref{eq:DGInterpolationEst} implies
     $$\lim_{h\downarrow0}\inf_{q\in Q_h}\|p-q\|_\PH=0,$$
  which is not true if $\ell=0$.  This is the reason why we only consider Raviart-Thomas-N\'ed\'elec
  finite element methods of order $\ell\geq1$.
\end{remark}
\begin{remark}\label{rem:VW}
  The connection between the DG norm $\|\cdot\|_\PH$ and the Raviart-Thomas-N\'ed\'elec
   finite element method was exploited in \cite{RVW:1996:Mixed} for the preconditioning
   of the saddle point problem \eqref{eq:RT}.
\end{remark}
\subsection{Stability Estimates}\label{subsec:Stability}
 Since $\bK$ is a smooth symmetric positive definite matrix on $\bar\O$,
 we have the obvious estimates
\begin{alignat}{3}
   a(\bw,\bv)&\lesssim\|\bw\|_\LT\|\bv\|_\LT&\qquad&\forall\,\bv,\bw\in [\LT]^d,\label{eq:aBdd}\\
   a(\bv,\bv)&\gtrsim\|\bv\|_\LT^2\gtrsim \|\bv\|_\PLT^2&\qquad&\forall\,\bv\in V_h.\label{eq:aCoercive}
\end{alignat}
\par
 Let $\alpha$ be the index of elliptic regularity that appears in \eqref{eq:EllipticRegularityEst}.
 It follows from integration by parts and \eqref{eq:InteriorJump}--\eqref{eq:BoundaryJump} that
\begin{align}\label{eq:IBP}
  -\langle \div\bv,q\rangle_{H^{-1+\alpha}(\O)\times H^{1-\alpha}(\O)}
  &=\TSum\Big(-\int_{\p T} (\bn_T\cdot\bv)q\,ds+\int_T \bv\cdot\nabla q\,dx\Big)\\
     &=-\SSum\int_\sigma \bv\cdot\jump{q}\,ds+\sum_{T\in\cT_h}\int_T \bv\cdot\nabla q\,dx\notag
\end{align}
 for all $\bv\in [H^\alpha(\O)]^d+V_h$ and $q\in H^1(\O)+Q_h$, and therefore
\begin{align}\label{eq:bBdd}
  b(\bv,q) &\leq \Big(\SSum\hsa\|\bv\cdot\bns\|_\LTS^2\Big)^\frac12
           \Big(\SSum\hsa^{-1}\|\jump{q}\|_\LTS^2\Big)^\frac12\notag\\
       &\hspace{30pt}+
           \Big(\sum_{T\in\cT_h}\|\bv\|_{L_2(T)}^2\Big)^\frac12
           \Big(\sum_{T\in\cT_h}\|\nabla q\|_{L_2(T)}^2\Big)^\frac12\\
            &\leq \|\bv\|_\PLT\|q\|_\PH\notag
\end{align}
  for all $\bv\in [H^\alpha(\O)]^d+V_h$ and $q\in H^1(\O)+Q_h$.
\par
 Given any $q\in Q_h$  (a piecewise $P_\ell$ function), we define $\bv_q\in V_h$ by
\begin{alignat}{3}
  \bv_q\cdot\bns &=-\frac{1}{\hsa}\jump{q}\cdot\bns &\qquad&\forall\,\sigma\in\Sh,\label{eq:bv1}\\
  \PiT\bv_q&=\nabla q_{\sss T}&\qquad&\forall\,T\in \cT_h,\label{eq:bv2}
\end{alignat}
 where $\PiT$ is the orthogonal projection from $[\LTT]^d$  onto  $[P_{\ell-1}(T)]^d$.
 It follows from \eqref{eq:bv1}, \eqref{eq:bv2}, the definition of the Raviart-Thomas-N\'ed\'elec element
 \cite{Nedelec:1980:EdgeElements,BBF:2013:Mixed}
 and scaling that
\begin{align}\label{eq:bvNormEquivalence}
  \|\bv_q\|_\PLT^2&\approx \TSum \|\PiT\bv_q\|_\LTT^2+\SSum \hsa\|\bv_q\cdot\bns\|_\LTS^2\\
                &=\TSum \|\nabla q\|_\LTT^2+\SSum\frac{1}{\hsa}\|\jump{q}\|_\LTS^2=\|q\|_\PH^2.\notag
\end{align}
 On the other hand \eqref{eq:IBP}, \eqref{eq:bv1} and \eqref{eq:bv2} imply
\begin{equation}\label{eq:InfSupEquality}
  b(\bv_q,q)=\| q\|_\PH^2.
\end{equation}
 Combining \eqref{eq:bBdd},
 \eqref{eq:bvNormEquivalence} and \eqref{eq:InfSupEquality}, we arrive at the inf-sup
 condition
\begin{equation}\label{eq:InfSupCondition}
 \sup_{\bv\in V_h}\frac{b(\bv,q)}{\|\bv\|_\PLT}\approx \|q\|_\PH\qquad\forall\,q\in Q_h.
\end{equation}
\par
 It follows from \eqref{eq:aBdd}, \eqref{eq:aCoercive}, \eqref{eq:bBdd}, \eqref{eq:InfSupCondition}
 and the saddle point theory \cite{Babushka:1973:LM,Brezzi:1974:SPP,BBF:2013:Mixed} that
\begin{align}\label{eq:StabilityEstimate}
   &\|\bv\|_\PLT+\|q\|_\PH\\
   &\hspace{50pt}\approx
    \sup_{(\bw,r)\in\ESh}\frac{\cB\big((\bv,q),(\bw,r)\big)}{\|\bw\|_\PLT+\|r\|_\PH}
    \qquad\forall\, (\bv,q)\in\ESh.\notag
\end{align}
\par
 Note that \eqref{eq:aBdd}, \eqref{eq:bBdd} and \eqref{eq:BDef} imply
\begin{equation}\label{eq:BBdd}
  \cB((\bv,q),(\bw,r)\big)\lesssim (\|\bv\|_\PLT+\|q\|_\PH)(\|\bw\|_\PLT+\|r\|_\PH)
\end{equation}
 for all  $\bv,\bw\in [H^\alpha(\O)]^d+V_h$ and $q,r\in H^1(\O)+Q_h$,

\subsection{Error Estimates}\label{subsec:Error}
  Let $\alpha$ be the index of elliptic regularity in
 \eqref{eq:EllipticRegularityEst} and $F\in H^{-1+\alpha}(\O)$.
  According to Remark~\ref{rem:RHS}, the system \eqref{eq:RT} is well-defined and the solution
 $(\bu,p)\in [L_2(\O)]^d\times H^1_0(\O)$ of \eqref{eq:ADarcy1}--\eqref{eq:ADarcy2}
 satisfies
\begin{equation*}
   \cB\big((\bu,p),(\bv,q)\big)=F(q)\qquad\forall\,(\bv,q)\in\ESh.\\
\end{equation*}
 Consequently we have the Galerkin relation
\begin{equation}\label{eq:GalerkinRelation}
  \cB\big((\bu,p),(\bv,q)\big)=\cB\big((\bu_h,p_h),(\bv,q)\big) \qquad\forall\,(\bv,q)\in\ESh.
\end{equation}
\par
 Let $(\bv,q)\in \ESh$ be arbitrary.
 It follows from \eqref{eq:StabilityEstimate}--\eqref{eq:GalerkinRelation} that
\begin{align*}
  &\|\bv-\bu_h\|_\PLT+\|q-p_h\|_\PH\\
    &\hspace{50pt}\approx \sup_{(\bw,r)\in\ESh}\frac{\cB\big((\bv-\bu_h,q-p_h),(\bw,r)\big)}{\|\bw\|_\PLT+\|r\|_\PH}\\
    &\hspace{50pt}=\sup_{(\bw,r)\in\ESh}\frac{\cB\big((\bv-\bu,q-p),(\bw,r)\big)}{\|\bw\|_\PLT+\|r\|_\PH}
    \lesssim \|\bv-\bu\|_\PLT+\|q-p\|_\PH
\end{align*}
 and hence
\begin{align*}
  &\|\bu-\bu_h\|_\PLT+\|p-p_h\|_\PH \\
    &\hspace{40pt}\leq \|\bu-\bv\|_\PLT+\|p-q\|_\PH+
       \|\bv-\bu_h\|_\PLT+\|q-p_h\|_\PH\\
    &\hspace{40pt} \lesssim \|\bu-\bv\|_\PLT+\|p-q\|_\PH,
\end{align*}
 which then implies the quasi-optimal error estimate
\begin{equation}\label{eq:QuasiOptimalError}
  \|\bu-\bu_h\|_\PLT+\|p-p_h\|_\PH\lesssim \inf_{\bv\in V_h}\|\bu-\bv\|_\PLT+\inf_{q\in Q_h}\|p-q\|_\PH.
\end{equation}
\par
 Putting \eqref{eq:EllipticRegularityEst}, \eqref{eq:RTInterpolationEst1}, \eqref{eq:RTInterpolationEst2},
 \eqref{eq:DGInterpolationEst}
 and \eqref{eq:QuasiOptimalError} together, we have
\begin{equation}\label{eq:ConcreteError}
  \|\bu-\bu_h\|_\PLT+\|p-p_h\|_\PH\lesssim h^\alpha \|F\|_{H^{-1+\alpha}(\O)},
\end{equation}
 and in the case where $p\in H^m(\O)$ for $m\leq \ell+1$,
\begin{equation}\label{eq:SmoothError}
   \|\bu-\bu_h\|_\PLT+\|p-p_h\|_\PH\lesssim h^{m-1}|p|_{H^{m}(\O)}.
\end{equation}
\begin{remark}\label{rem:StandardErrors}
 The estimates \eqref{eq:ConcreteError} and \eqref{eq:SmoothError} in the nonconforming energy norm
  $\|\cdot\|_\PLT+\|\cdot\|_\PH$ are more informative than the
 standard error estimates for the Raviart-Thomas-N\'ed\'elec finite element methods in
 \cite{RT:1977:MFM,FO:1980:Mixed,BBF:2013:Mixed} since they provide approximations of the flux on the element interfaces.
 One can also recover the standard error estimates from \eqref{eq:ConcreteError}--\eqref{eq:SmoothError}.
\end{remark}
\section{Multigrid Methods}\label{sec:MG}
 We will introduce the multigrid methods for \eqref{eq:RT} in this section.
 The operators involved are defined with respect to a mesh-dependent inner product, and the
 smoothers for pre-smoothing and post-smoothing are defined in terms of a block-diagonal preconditioner.
\subsection{Set-Up}\label{subsec:SetUp}
 Let $\cT_0$ be an initial triangulation of $\O$ and the triangulations $\cT_1,\cT_2,\ldots$
 be obtained from $\cT_0$ through uniform subdivisions.  Since the Raviart-Thomas-N\'ed\'elec finite
 element pairs $V_k\times Q_k$ associated with $\cT_k$ are nested, we take the coarse-to-fine intergrid
 transfer operator
 $I_{k-1}^k:\FE{k-1}\longrightarrow \FE{k}$ to be the natural injection and define the Ritz projection
 operator $P_k^{k-1}:\FE{k}\longrightarrow\FE{k-1}$ by
\begin{equation}\label{eq:RitzProjection}
  \cB\big(P_k^{k-1}(\bv,q),(\bw,r)\big)=\cB\big((\bv,q),I_{k-1}^k(\bw,r)\big)
\end{equation}
 for all $(\bv,q)\in\FE{k}$ and $(\bw,r)\in\FE{k-1}$.
\par
 Let $(\cdot,\cdot)_k$ be a mesh-dependent inner product on $V_k$ such that
\begin{equation}\label{eq:VectorIP}
  (\bv,\bv)_k\approx \|\bv\|_\LT^2\qquad\forall\,\bv\in V_k
\end{equation}
 and the nodal basis (vector) functions for $V_k$ are orthogonal with respect to $(\cdot,\cdot)_k$.
 Similarly, let $\db{\cdot}{\cdot}$ be a mesh-dependent inner product on $Q_k$ such that
\begin{equation}\label{eq:ScalarIP}
  \db{q}{q}\approx \|q\|_\LT^2  \quad \forall\,q\in Q_k
\end{equation}
 and the nodal basis functions for $Q_k$ are orthogonal with respect to $\db{\cdot}{\cdot}$.
\begin{remark}\label{rem:DIPs}
  The inner products $(\cdot,\cdot)_k$ and $\db{\cdot}{\cdot}$ are constructed by
  mass lumping. 
\end{remark}
\par
 The mesh-dependent inner product $[\cdot,\cdot]_k$ on $\FE{k}$ is then defined by
\begin{equation}\label{eq:MDIP}
   \big[\vq,\wr\big]_k=h_k^2(\bv,\bw)_k
      +\db{q}{r},
\end{equation}
 where $h_k=\max_{T\in\cT_k}\text{diam}\, T$ is the mesh size of $\cT_k$.
 We take the fine-to-coarse intergrid transfer operator $I_k^{k-1}:\FE{k}\longrightarrow\FE{k-1}$
 to be the transpose of $I_{k-1}^k$ with respect to the mesh-dependent inner products on $\FE{k}$
 and $\FE{k-1}$, i.e.,
\begin{equation}\label{eq:FTC}
  \big[I_k^{k-1}\vq,\wr\big]_{k-1}=\big[\vq,I_{k-1}^k\wr\big]_k
\end{equation}
 for all $\vq\in \FE{k}$ and $\wr\in\FE{k-1}$.
\par
 Let the system operator $\Bk:\FE{k}\longrightarrow\FE{k}$ be defined by
\begin{equation}\label{eq:BkDef}
  [\Bk\vq,\wr]_k=\cB\big(\vq,\wr\big)\qquad\forall\,\vq,\wr\in\FE{k}.
\end{equation}
 Our goal is to develop multigrid algorithms for problems of the form
\begin{equation}\label{eq:GeneralProblem}
  \Bk\vq=\gz.
\end{equation}
%
\subsection{A Block-Diagonal Preconditioner}\label{subsec:Preconditioner}
 Let $L_k:Q_k\longrightarrow Q_k$ be an operator that is SPD with respect to
  $\db{\cdot}{\cdot}$ and  satisfies
\begin{equation}\label{eq:LkDef}
  \db{L_k^{-1}q}{q}\approx \|q\|_\PH^2 \qquad\forall\,q\in Q_k.
\end{equation}
 Then the preconditioner $\Sk:\FE{k}\longrightarrow \FE{k}$ given by
\begin{equation}\label{eq:bSkDef}
   \Sk(\bv,q)=(h_k^2\bv,L_k q)
\end{equation}
 is SPD with respect to $[\cdot,\cdot]_k$
 and we have
\begin{equation}\label{eq:SkInverse}
  [\Sk^{-1}(\bv,q),(\bv,q)]_k\approx \|\bv\|_\LT^2+\dkNorm{q}^2\qquad\forall\,(\bv,q)\in\kES
\end{equation}
 by \eqref{eq:VectorIP}--\eqref{eq:MDIP} and \eqref{eq:LkDef}.
\begin{remark}
  The operator $L_k$ can be constructed through  multigrid \cite{GK:2003:DG,BZhao:2005:MGDG2}
  or domain decomposition \cite{FK:2001:TLASDG,LT:2003:Overlapping,AA:2007:DDDG}
\end{remark}
\par
 The following result connects the operators $\Bk$, $\Sk$ and the nonconforming energy norm
 for $\FE{k}$.
\begin{lemma}\label{lem:BkSkBk}
  The norm equivalence
\begin{equation}\label{eq:BkSkBk}
  \Big[\BSB\vq,\vq\big]_k^\frac12 \approx \|\bv\|_\LT+\|q\|_\PH\qquad\forall\,\vq\in\FE{k}
\end{equation}
 holds for $k=0,1,2,\ldots$.
\end{lemma}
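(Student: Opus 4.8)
The plan is to reduce the left-hand side of \eqref{eq:BkSkBk} to the supremum appearing in the stability estimate \eqref{eq:StabilityEstimate}, read at level $k$, which already contains the desired equivalence. First I would observe that $\cB$ is symmetric: because $\bK$ (hence $\bK^{-1}$) is symmetric we have $a(\bw,\bv)=a(\bv,\bw)$, while the two copies of $b(\cdot,\cdot)$ in \eqref{eq:BDef} simply exchange roles when the arguments are swapped, so that $\cB(\vq,\wr)=\cB(\wr,\vq)$. By \eqref{eq:BkDef} this makes $\Bk$ self-adjoint with respect to $[\cdot,\cdot]_k$, and therefore
\begin{equation*}
  [\BSB\vq,\vq]_k=[\Sk\Bk\vq,\Bk\vq]_k.
\end{equation*}
Since $\Sk$ is SPD with respect to $[\cdot,\cdot]_k$, the right-hand side is the square of the $\Sk$-weighted norm of $\Bk\vq$.

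Next I would invoke the elementary duality identity
\begin{equation*}
  [\Sk\eta,\eta]_k^\frac12=\sup_{\zeta\in\FE{k}}\frac{[\eta,\zeta]_k}{[\Sk^{-1}\zeta,\zeta]_k^\frac12},
\end{equation*}
valid for any operator $\Sk$ that is SPD with respect to $[\cdot,\cdot]_k$ (Cauchy--Schwarz in the $\Sk$ inner product, with equality attained at $\zeta=\Sk\eta$). Taking $\eta=\Bk\vq$ and using $[\Bk\vq,\zeta]_k=\cB(\vq,\zeta)$ from \eqref{eq:BkDef}, together with \eqref{eq:SkInverse} to control the denominator, gives
\begin{equation*}
  [\BSB\vq,\vq]_k^\frac12\approx
  \sup_{(\bw,r)\in\FE{k}}\frac{\cB\big((\bv,q),(\bw,r)\big)}{\|\bw\|_\LT+\dkNorm{r}}.
\end{equation*}

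Finally, \eqref{eq:NormEquivalence} gives $\|\bw\|_\LT\approx\|\bw\|_\PLTk$ for $\bw\in V_k$, so the denominator above is equivalent to $\|\bw\|_\PLTk+\dkNorm{r}$, and the level-$k$ instance of \eqref{eq:StabilityEstimate} identifies the resulting supremum with $\|\bv\|_\PLTk+\dkNorm{q}$; one more application of \eqref{eq:NormEquivalence} replaces $\|\bv\|_\PLTk$ by $\|\bv\|_\LT$, yielding \eqref{eq:BkSkBk} (with $\dkNorm{\cdot}$ the level-$k$ form of $\|\cdot\|_\PH$). I expect the only genuine subtlety to be the duality step that converts the $\Sk$-weighted norm of $\Bk\vq$ into a supremum of $\cB$ tested against a $\Sk^{-1}$-normalized pair; once that translation and the weight equivalence \eqref{eq:SkInverse} are in place, the stability estimate \eqref{eq:StabilityEstimate} carries the entire analytic burden and what remains are the routine norm equivalences \eqref{eq:NormEquivalence}.
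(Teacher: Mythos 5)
Your proposal is correct and follows essentially the same route as the paper: the paper also rewrites $[\Bk\Sk\Bk\vq,\vq]_k^{1/2}$ as a weighted norm of $\Sk\Bk\vq$ (equivalently, the $\Sk$-weighted norm of $\Bk\vq$), converts it by the same duality identity into $\sup_{\wr}\cB(\vq,\wr)/[\Sk^{-1}\wr,\wr]_k^{1/2}$, and then concludes via \eqref{eq:SkInverse}, \eqref{eq:NormEquivalence} and \eqref{eq:StabilityEstimate}. Your explicit remark on the symmetry of $\cB$ (hence self-adjointness of $\Bk$) just makes visible a step the paper leaves implicit.
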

\begin{proof}  Let $\vq\in\FE{k}$ be arbitrary and $(\bm{x},y)=\Sk\Bk\vq$.  It follows from
 \eqref{eq:NormEquivalence}, \eqref{eq:StabilityEstimate}, \eqref{eq:BkDef}, \eqref{eq:SkInverse}
 and duality that
\begin{align*}
  \big[\BSB\vq,\vq\big]_k^\frac12&=\big[\Sk^{-1}(\Sk\Bk)\vq,(\Sk\Bk)\vq\big]_k^\frac12\\
     &=\big[\Sk^{-1}(\bm{x},y),(\bm{x},y)\big]_k^\frac12\\
     &=\sup_{\wr\in \FE{k}}\frac{\big[\Sk^{-1}(\bm{x},y),\wr\big]_k}
              {\big[\Sk^{-1}\wr,\wr\big]_k^\frac12}\\
     &\approx \sup_{\wr\in \FE{k}}\frac{\big[\Bk\vq,\wr\big]_k}
              {\|\bw\|_\LT+\|r\|_\PH}\\
     &=\sup_{\wr\in \FE{k}}\frac{\cB\big(\vq,\wr\big)}
              {\|\bw\|_\LT+\|r\|_\PH}
     \approx \|\bv\|_\LT+\|q\|_\PH.
\end{align*}
\end{proof}
\par
 Let $\rho(\BSB)$ be the spectral radius of the operator $\BSB$.
 It follows from \eqref{eq:MDIP}, \eqref{eq:BkSkBk} and a standard inverse estimate
 \cite{Ciarlet:1978:FEM,BScott:2008:FEM} that
\begin{equation}\label{eq:SpectralRadius}
  \rho(\BSB)\lesssim h_k^{-2}.
\end{equation}
 We can therefore choose a damping factor $\delta_k$ of the form
 $Ch_k^2$ such that
\begin{equation}\label{eq:DampingFactor}
  \delta_k\cdot\rho(\BSB)\leq 1.
\end{equation}
\subsection{Multigrid Algorithms}\label{subsec:NG}
 Let the output of the $W$-cycle algorithm for \eqref{eq:GeneralProblem} with initial
 guess $(\bv_0,q_0)$ and $m_1$ (resp. $m_2$) pre-smoothing (resp. post-smoothing) steps
 be denoted by $MG_W(k,\gz,(\bv_0,q_0),m_1,m_2)$.
\par
 We use a direct solve for $k=0$, i.e., we take
 $MG_W(0,\gz,(\bv_0,q_0),m_1,m_2)$ to be $\mathbb{B}_0^{-1}\gz$. For $k\geq1$, we compute
 $MG_W(k,\gz,(\bv_0,q_0),m_1,m_2)$ in three steps.
\par\medskip\noindent
{\em Pre-Smoothing} \quad The approximate solutions $(\bv_1,q_1),\ldots,(\bv_{m_1},q_{m_1})$
 are computed recursively  by
\begin{equation}\label{eq:PreSmoothing}
  (\bv_{j},q_{j})=(\bv_{j-1},q_{j-1})+\delta_k \Sk\Bk\big(\gz-\Bk(\bv_{j-1},q_{j-1})\big)
\end{equation}
 for $1\leq j\leq m_1$,
 where the damping factor $\delta_k$ satisfies \eqref{eq:DampingFactor}.
\par\medskip\noindent
{\em Coarse Grid Correction} \quad Let $(\bm{g}',z')=I_k^{k-1}\big(\gz-\Bk(\bv_{m_1},q_{m_1})\big)$ be the
 transferred residual of $(\bv_{m_1},q_{m_1})$ and compute $(\bv_1',q_1'),(\bv_2',q_2')\in \FE{k-1}$ by
\begin{align}
 (\bv_1',q_1')&=MG_W(k-1,(\bm{g}',z'),({\bf 0},0),m_1,m_2),\label{eq:CGC1}\\
 (\bv_2',q_2')&=MG_W(k-1,(\bm{g}',z'),(\bv_1',q_1'),m_1,m_2).\label{eq:CGC2}
\end{align}
 We then take $(\bv_{m_1+1},q_{m_1+1})$ to be $(\bv_{m_1},q_{m_1})+I_{k-1}^k(\bv_2',q_2')$.
\par\medskip\noindent
{\em Post-Smoothing} \quad The approximate solutions
 $(\bv_{m_1+1},q_{m_1+1}),\ldots,(\bv_{m_1+m_2+1},q_{m_1+m_2+1})$
 are computed recursively  by
\begin{equation}\label{eq:PostSmoothing}
  (\bv_{j},q_{j})=(\bv_{j-1},q_{j-1})+\delta_k \Bk\Sk\big(\gz-\Bk(\bv_{j-1},q_{j-1})
\end{equation}
 for $m_1+2\leq j\leq m_1+m_2+1$.
\par\smallskip
 The final output is $MG_W(k,\gz,(\bv_0,q_0),m_1,m_2)=(\bv_{m_1+m_2+1},q_{m_1+m_2+1})$.
\par\medskip
 Let $MG_V(k,\gz,(\bv_0,q_0),m_1,m_2)$ be the output of the $V$-cycle algorithm for
 \eqref{eq:GeneralProblem} with initial
 guess $(\bv_0,q_0)$ and $m_1$ (resp. $m_2$) pre-smoothing (resp. post-smoothing) steps.
 The computation of
 $MG_V(k,\gz,(\bv_0,q_0),m_1,m_2)$ differs from the computation for the $W$-cycle algorithm
 only in the coarse grid correction step, where we compute
  $$(\bv_1',q_1')=MG_V(k-1,(\bm{g}',z'),({\bf 0},0),m_1,m_2)$$
 and take $(\bv_{m_1+1},q_{m_1+1})$ to be $(\bv_{m_1},q_{m_1})+I_{k-1}^k(\bv_1',q_1')$.
%
\subsection{Error Propagation Operators}\label{subsec:ErrorPropagation}
 The effect of one post-smoothing step defined by \eqref{eq:PostSmoothing} is measured by
\begin{equation}\label{eq:RkDef}
  R_k=Id_k-\delta_k\BSB,
\end{equation}
 where $Id_k:\FE{k}\longrightarrow\FE{k}$ is the identity operator.  The choice of
 the smoother $\Bk\Sk$ for post-smoothing is motivated by the fact that \eqref{eq:RkDef}
 is the error propagation operator of one Richardson relaxation step for the SPD problem
\begin{equation}\label{eq:SPDProblem}
  \BSB\vq=\Bk\Sk\gz,
\end{equation}
 which is equivalent to \eqref{eq:GeneralProblem}.
\par
 On the other hand, the effect of one pre-smoothing step defined by \eqref{eq:PreSmoothing} is measured by
\begin{equation}\label{eq:SkDef}
  S_k=Id_k-\delta_k\SBB.
\end{equation}
 Our choice of the smoother $\Sk\Bk$ for the pre-smoothing is motivated by the adjoint relation
\begin{equation}\label{eq:AdjointRelation}
  \cB\big(R_k\vq,\wr\big)=\cB\big(\vq,S_k\wr\big) \qquad\forall\,\vq,\wr\in\FE{k}
\end{equation}
 that follows from \eqref{eq:BkDef}, \eqref{eq:RkDef} and \eqref{eq:SkDef}.
\par
 The error propagation operator $E_k:\FE{k}\longrightarrow\FE{k}$ for the multigrid algorithms
 satisfies the well-known recursive relation \cite{Hackbusch:1985:MMA,BZ:2000:AMG,BScott:2008:FEM}
\begin{equation}\label{eq:MGRecursion}
  E_k=R_k^{m_2}(Id_k-I_{k-1}^kP_k^{k-1}+I_{k-1}^k E_{k-1}^p P_k^{k-1})S_k^{m_1}\quad\text{for}\;k\geq 1,
\end{equation}
 where $P_k^{k-1}$ is the Ritz projection operator defined in \eqref{eq:RitzProjection}
 and $p=2$ (resp. $1$) for the $W$-cycle (resp. $V$-cycle) algorithm.
\par
 Since $I_{k-1}^k$ is the natural injection, we have
\begin{equation}\label{eq:Projection}
  P_k^{k-1}I_{k-1}^k=Id_{k-1}, \qquad (Id_k-I_{k-1}^kP_k^{k-1})^2=Id_k-I_{k-1}^kP_k^{k-1},
\end{equation}
 and the Galerkin orthogonality
\begin{equation}\label{eq:GalerkinOrthogonality}
  0=\cB\big((Id_k-I_{k-1}^kP_k^{k-1})\vq,I_{k-1}^k\wr\big)
\end{equation}
 that is valid for all $\vq\in\FE{k}$ and $\wr\in\FE{k-1}$.
%
\section{Mesh-Dependent Norms for Multigrid Analysis}\label{sec:MeshDependentNorms}
 We introduce in this section a scale of mesh-dependent norms that are crucial for
 the convergence analysis of the $W$-cycle multigrid algorithm in Section~\ref{sec:Analysis}.
\subsection{Definition of the Mesh-Dependent Norms}\label{subsec:MDNorms}
 For $0\leq s\leq 1$, we define the scale of mesh-dependent norms $\|\cdot\|_{s,k}$
 in terms of the SPD operator $\BSB$ and the mesh-dependent inner product $[\cdot,\cdot]_k$
 as follows:
\begin{equation}\label{eq:MDNorms}
  \|\vq\|_{s,k}^2=\big[(\BSB)^s\vq,\vq\big]_k \qquad\forall\,\vq\in\FE{k}.
\end{equation}
\par
 In view of \eqref{eq:VectorIP}--\eqref{eq:MDIP}, \eqref{eq:BkSkBk} and \eqref{eq:MDNorms},
 we have the obvious norm equivalences
\begin{alignat}{3}
  \|\vq\|_{0,k}^2&\approx h_k^2\|\bv\|_\LT^2+\|q\|_\LT^2&\qquad& \forall\,\vq\in\FE{k},
  \label{eq:ZeroEquivalence}\\
  \|\vq\|_{1,k}^2&\approx \|\bv\|_\LT^2+\|q\|_\PHk^2&\qquad& \forall\,\vq\in\FE{k}.
  \label{eq:OneEquivalence}
\end{alignat}
 Thus the $\|\cdot\|_{1,k}$ norm is equivalent to the nonconforming energy norm on $\FE{k}$
 and we have the following stability result.
\begin{lemma}\label{lem:Stability}
  The operators $I_{k-1}^k$ and $P_k^{k-1}$ are stable with respect to the mesh-dependent
  norm $\|\cdot\|_{1,k}$.
\end{lemma}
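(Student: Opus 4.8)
The plan is to establish stability of the two intergrid operators separately with respect to the norm $\|\cdot\|_{1,k}$, which by \eqref{eq:OneEquivalence} is equivalent to showing
$$\|I_{k-1}^k(\bw,r)\|_{1,k}\lesssim\|(\bw,r)\|_{1,k-1}\quad\text{and}\quad\|P_k^{k-1}\vq\|_{1,k-1}\lesssim\|\vq\|_{1,k}.$$
The stability of the coarse-to-fine operator should be the easy half. Since $I_{k-1}^k$ is the natural injection and the finite element pairs are nested, I would argue that the nonconforming energy norm $\|\bw\|_{L_2(\O)}+\|r\|_{H^1(\O;\cT_{k-1})}$ of a coarse function is comparable to its energy norm measured on the fine triangulation $\cT_k$. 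The $L_2$ part of the vector norm is mesh-independent, so the only point needing care is the DG part $\|\cdot\|_{\PHk}$ of the scalar variable: a coarse function $r\in Q_{k-1}$ is piecewise polynomial on $\cT_{k-1}$, hence automatically piecewise polynomial on $\cT_k$, and its jumps across the new interior faces created by the subdivision vanish because $r$ is continuous across them. A scaling argument comparing $\hsa^{-1}$ on the two levels (the face diameters shrink by a fixed factor under uniform refinement) then gives the two-sided bound, so in fact $\|I_{k-1}^k(\bw,r)\|_{1,k}\approx\|(\bw,r)\|_{1,k-1}$.

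The stability of the fine-to-coarse Ritz projection is the main obstacle and is where I would spend most of the effort. Here I would exploit the abstract characterization of the energy norm provided by the stability estimate \eqref{eq:StabilityEstimate} together with the boundedness \eqref{eq:BBdd} of $\cB$. For $\vq\in\FE{k}$, apply \eqref{eq:StabilityEstimate} at level $k-1$ to $P_k^{k-1}\vq$ and write
$$\|P_k^{k-1}\vq\|_{1,k-1}\approx\sup_{\wr\in\FE{k-1}}\frac{\cB\big(P_k^{k-1}\vq,\wr\big)}{\|\bw\|_{L_2(\O)}+\|r\|_{\PHkm}}.$$
By the defining relation \eqref{eq:RitzProjection} of the Ritz projection, the numerator equals $\cB\big(\vq,I_{k-1}^k\wr\big)$, and then \eqref{eq:BBdd} bounds this by $(\|\bv\|_{L_2(\O)}+\|q\|_{\PHk})\,\|I_{k-1}^k\wr\|_{1,k}$. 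Invoking the coarse-to-fine stability just established to replace $\|I_{k-1}^k\wr\|_{1,k}$ by $\|\wr\|_{1,k-1}\approx\|\bw\|_{L_2(\O)}+\|r\|_{\PHkm}$, the denominator cancels and we are left with $\|P_k^{k-1}\vq\|_{1,k-1}\lesssim\|\bv\|_{L_2(\O)}+\|q\|_{\PHk}\approx\|\vq\|_{1,k}$, as desired.

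The key technical point underpinning both halves is that \eqref{eq:BBdd} is valid on the broken spaces $[H^\alpha(\O)]^d+V_k$ and $H^1(\O)+Q_k$, so the cross-level evaluations of $\cB$ are legitimately controlled in the mesh-dependent norms; I would double-check that the injected coarse function $I_{k-1}^k\wr$ is measured in the correct level-$k$ norm and that the face-size scaling under uniform subdivision is uniform in $k$ by shape regularity. Once both operator bounds are in hand, the lemma follows immediately from the norm equivalence \eqref{eq:OneEquivalence}. The subtlety I expect to require the most care is verifying that the jump terms defining $\|\cdot\|_{\PHk}$ behave correctly across faces that are interior to a coarse element but are genuine faces of $\cT_k$, so that the coarse-to-fine comparison of the DG norm is truly two-sided and does not lose a power of the mesh size.
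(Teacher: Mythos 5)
Your proposal is correct and follows essentially the same route as the paper: the injection is handled by comparing the broken $H^1$ norms across levels directly (the paper only records the one-sided bound $\|r\|_{\PHk}\lesssim\|r\|_{\PHkm}$, which is all that is needed), and the Ritz projection is handled by the identical duality argument combining \eqref{eq:StabilityEstimate}, \eqref{eq:RitzProjection}, \eqref{eq:BBdd} and the stability of $I_{k-1}^k$. No substantive differences to report.
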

\begin{proof}
 Since $I_{k-1}^k$ is the natural injection,
 the stability estimate
\begin{align*}
   \|I_{k-1}^k\wr\|_{1,k}&\approx \|\bw\|_\LT+\|r\|_\PHk \\
    &\lesssim \|\bw\|_\LT+\|r\|_\PHkm
   \approx \|\wr\|_{1,k-1} \quad\forall\,\wr\in \FE{k-1}
\end{align*}
 follows from \eqref{eq:PHNorm}, \eqref{eq:OneEquivalence} and a direct calculation.
\par
 The stability of $P_k^{k-1}$ then follows from \eqref{eq:NormEquivalence},
 \eqref{eq:StabilityEstimate}, \eqref{eq:BBdd}, \eqref{eq:RitzProjection}, \eqref{eq:OneEquivalence}
 and duality:
\begin{align*}
  \|P_k^{k-1}\vq\|_{1,k-1}&\approx \sup_{\wr\in \FE{k-1}}
    \frac{\cB\big(P_k^{k-1}\vq,\wr\big)}{\|\wr\|_{1,k-1}}\\
    &=\sup_{\wr\in \FE{k-1}}
    \frac{\cB\big(\vq,I_{k-1}^k\wr\big)}{\|\wr\|_{1,k-1}}\\
    &\lesssim \|\vq\|_{1,k}  \hspace{140pt}\forall\,\vq\in\FE{k}.
\end{align*}
\end{proof}
\par
 We will need a connection between $\|\cdot\|_{1-\alpha,k}$ and a Sobolev norm in the
 proof of the approximation property in Section~\ref{sec:Analysis}.
 Towards this goal we introduce
 the operator $D_k:Q_k\longrightarrow Q_k$ defined by
\begin{equation}\label{eq:DkDef}
  \db{D_k q}{r}=\TkSum\int_T\nabla q\cdot\nabla r\,dx+\SSumk\hsa^{-1}\int_\sigma \jump{q}\cdot\jump{r}\,ds
  \qquad\forall(q,r)\in\FE{k}.
\end{equation}
 Then $D_k$ is SPD with respect to $\db{\cdot}{\cdot}$ and the relations
\begin{alignat}{3}
  \db{D_k^0q}{q}&\approx \|q\|_\LT^2&\qquad&\forall\,q\in Q_k, \label{eq:TrivialRelation1}\\
  \db{D_kq}{q}&\approx \|q\|_\PHk^2 &\qquad&\forall\,q\in Q_k, \label{eq:TrivialRelation2}
\end{alignat}
 follow immediately from \eqref{eq:PHNorm}, \eqref{eq:ScalarIP} and \eqref{eq:DkDef}.
\begin{remark}\label{rem:Lk}
  The operator $L_k$ that appears in \eqref{eq:bSkDef} is just an optimal preconditioner of $D_k$.
\end{remark}
\par
  It follows from standard inverse estimates that $\rho(D_k)\lesssim h_k^{-2}$
  and hence we have, by the spectral theorem,
\begin{equation}\label{eq:DGNormInverseEst}
  \|q\|_\PHk^2=\db{D_k q}{q}\leq Ch_k^{2(s-1)}\db{D_k^s q}{q}\qquad\forall\,q\in Q_k.
\end{equation}
\par
 In view of \eqref{eq:ZeroEquivalence}, \eqref{eq:OneEquivalence}, \eqref{eq:TrivialRelation1}
 and \eqref{eq:TrivialRelation2}, we have
\begin{alignat*}{3}
  \|(\bv,q)\|_{0,k}^2&\approx h_k^2\|\bv\|_\LT^2+\db{D_k^0 q}{q} &\qquad&\forall\,q\in Q_k,\\
  \|(\bv,q)\|_{1,k}^2&\approx \|\bv\|_\LT^2+\db{D_k^1 q}{q}&\qquad&\forall\,q\in Q_k,
\end{alignat*}
 which imply, through interpolation between Hilbert scales \cite[Chapter~23]{Tartar:2007:Sobolev},
 the norm equivalence
\begin{equation}\label{eq:IntermediateNormEquivalence}
  \|(\bv,q)\|_{s,k}^2\approx h_k^{2(1-s)}\|\bv\|_\LT^2+\db{D_k^s q}{q}\qquad\forall \, (\bv,q)\in\FE{k}
\end{equation}
 that holds for $0\leq s\leq 1$.
\par
 It only remains to relate $\db{D_k^s q}{q}$ to Sobolev norms, which will require certain tools from
 the multigrid theory for nonconforming finite element methods \cite{Brenner:1999:CNM,BZhao:2005:MGDG2}.
\subsection{Enriching and Forgetting Operators}\label{subsec:EnrichForget}
 Let $\tQ_k\subset H^1(\O)$ be the $P_{d+1+\ell}$ Lagrange finite element
 space associated with $\cT_k$. The {\em enriching} operator
 $\cE_k:Q_k\longrightarrow\tQ_k$ is defined by  averaging, i.e.,
\begin{equation}\label{eq:EkDef}
  (\cE_k q)(x)=\frac{1}{|\cT_x|}\sum_{T\in\cT_x} q_{\sss T}(x),
\end{equation}
 where $x$ is any node for $\tQ_k$, $\cT_x$ is the set of the elements in $\cT_k$ that share the node $x$,
 and $|\cT_x|$ is the number of elements in $\cT_x$.
\par
 The following estimate is obtained by a straight-forward local calculation:
\begin{equation}\label{eq:EnrichingEst}
  \sum_{T\in\cT_k}\hT^{-2} \|q-\cE_k q\|_\LTT^2\lesssim
        \SiSumk \frac{1}{\hsa}\|\jump{q}\|_\LTS^2\qquad\forall\,q\in Q_k,
\end{equation}
 where $\hT=\text{diam}\,T$ and $\mathfrak{S}_k^i$ is the set of the interior faces.
\par
 Since $q$ and $\cE_kq$ agree at the $(\ell+1)(\ell+2)/2$ interior nodes for each
 $T\in\cT_k$ when $d=2$ and the $(\ell+1)(\ell+2)(\ell+3)/6$
  interior nodes for each $T\in\cT_k$ when $d=3$, we can define a {\em forgetting} operator
    $\cF_k:\tQ_k\longrightarrow Q_k$ element by element so that
\begin{equation}\label{eq:RightInverse}
  \cF_k\circ\cE_k=Id_k
\end{equation}
 as follows.  For any $\tq\in\tQ_k$, we define $\cF_k\tq$ to be the (unique) function $q\in Q_k$
 such that, for any $T\in\cT_k$,
  $q=\tq$ at the nodes of $\tQ_k$ interior to $T$.  We have, by scaling,
\begin{equation}\label{eq:ForgettingEst}
\|\tq-\cF_k\tq\|_\LTT\leq C\hT|\tq|_{H^1(T)}\qquad\forall\,\tq\in\tQ_k,\;T\in\cT_k.
\end{equation}
\par
 The estimates \eqref{eq:EnrichingEst} and \eqref{eq:ForgettingEst} then imply, through standard inverse estimates
 \cite{Ciarlet:1978:FEM,BScott:2008:FEM}, 
\begin{alignat}{3}
   \|\cE_k q\|_{H^1(\O)}&\leq C\|q\|_\PHk&\qquad&\forall\, q\in Q_k,\label{eq:cEkHOneEst}\\
   \|\cE_kq\|_\LT&\leq C\|q\|_\LT&\qquad&\forall\,q\in Q_k,\label{eq:cEkLTwoEst}\\
   \|q-\cE_kq\|_\LT&\leq C_sh_k^s\|q\|_{H^s(\O)}&\qquad&\forall\,q\in Q_k,\,0\leq s<\frac12,
   \label{eq:cEkFractionalEst}\\
   \|\cF_k \tq\|_\PHk&\leq C\|\tq \|_{H^1(\O)}&\qquad&\forall\, \tq\in \tQ_k,\label{eq:cFkHOneEst}\\
   \|\cF_k\tq\|_\LT&\leq C\|\tq\|_\LT&\qquad&\forall\, \tq\in \tQ_k.\label{eq:cFkLTwoEst}
\end{alignat}
\subsection{Equivalence between Mesh-Dependent Norms and Sobolev Norms}
\label{subsec:DkChracterization}
 We will connect the mesh-dependent norms $\bm \|\cdot\|_{s,k}$
  to the Sobolev norms through two lemmas.
\begin{lemma}\label{lem:EnrichDGNormEquivalence}
  The norm equivalence
\begin{equation*}
    \db{D_k^s q}{q}\approx \|\cE_kq\|_{H^s(\O)}^2 \qquad \forall\, q\in Q_k
\end{equation*}
 holds for $0\leq s\leq 1$.
\end{lemma}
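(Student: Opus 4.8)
The plan is to read both sides of the claimed equivalence as interpolation norms and to transfer between them using the enriching and forgetting operators. On $Q_k$ the quantity $\db{D_k^s q}{q}^{1/2}$ is, by the spectral theorem, exactly the Hilbert-scale norm generated by the two endpoint norms $\db{D_k^0 q}{q}^{1/2}$ and $\db{D_k^1 q}{q}^{1/2}$; by \eqref{eq:TrivialRelation1} and \eqref{eq:TrivialRelation2} these are equivalent to $\|q\|_\LT$ and $\dkNorm{q}$. Hence, interpolating between Hilbert scales exactly as in the derivation of \eqref{eq:IntermediateNormEquivalence} (cf. \cite[Chapter~23]{Tartar:2007:Sobolev}), $\db{D_k^s q}{q}^{1/2}$ is equivalent, uniformly in $k$, to the norm of $q$ in the interpolation space $[(Q_k,\|\cdot\|_\LT),(Q_k,\dkNorm{\cdot})]_s$. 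On the other side, since $[\LT,H^1(\O)]_s=H^s(\O)$ with constants depending only on $\O$ and $s$ (and in particular not on $k$), it suffices to show that $\cE_k$ and a suitable left inverse are bounded across these two scales.

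For the upper bound I would interpolate the operator $\cE_k$ itself. By \eqref{eq:cEkLTwoEst} it is bounded from $(Q_k,\|\cdot\|_\LT)$ into $\LT=H^0(\O)$, and by \eqref{eq:cEkHOneEst} it is bounded from $(Q_k,\dkNorm{\cdot})$ into $H^1(\O)$. Because the targets here are the full Sobolev spaces, the interpolation theorem for linear operators immediately gives that $\cE_k$ maps $[(Q_k,\|\cdot\|_\LT),(Q_k,\dkNorm{\cdot})]_s$ boundedly into $[\LT,H^1(\O)]_s=H^s(\O)$, that is, $\|\cE_k q\|_{H^s(\O)}\lesssim \db{D_k^s q}{q}^{1/2}$.

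The reverse bound is the delicate half, because the forgetting operator $\cF_k$ is defined only on the subspace $\tQ_k$, and interpolating a couple of subspaces need not reproduce the ambient $H^s(\O)$ norm. To get around this I would extend $\cF_k$ to a retraction defined on the full scale: choose an $H^1$-stable, $\LT$-bounded projection $J_k$ of $\LT$ onto $\tQ_k$ (for instance a Scott--Zhang quasi-interpolant) and set $G_k=\cF_k\circ J_k$. Then $G_k$ is bounded from $\LT$ into $(Q_k,\|\cdot\|_\LT)$ by \eqref{eq:cFkLTwoEst} and from $H^1(\O)$ into $(Q_k,\dkNorm{\cdot})$ by \eqref{eq:cFkHOneEst}, so interpolation yields $\db{D_k^s G_k v}{G_k v}^{1/2}\lesssim \|v\|_{H^s(\O)}$ for $v\in H^s(\O)$. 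Applying this with $v=\cE_k q$, and using that $J_k$ fixes $\tQ_k$ together with the right-inverse relation $\cF_k\circ\cE_k=Id_k$ from \eqref{eq:RightInverse}, we get $G_k\cE_k q=\cF_k\cE_k q=q$, whence $\db{D_k^s q}{q}^{1/2}\lesssim\|\cE_k q\|_{H^s(\O)}$. Combining the two bounds proves the equivalence for $0\le s\le 1$.

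The main obstacle is exactly this subspace-interpolation issue in the lower bound: without a full-space retraction one only obtains that the $H^s(\O)$ norm controls the subspace interpolation norm, which is the wrong direction. Producing the $k$-uniform retraction $G_k$ --- equivalently, an $H^1$-stable projection onto $\tQ_k$ --- is what makes the argument work, and it is the one place where quasi-interpolation enters; everything else is a mechanical application of the interpolation functor to the endpoint estimates already recorded for $\cE_k$ and $\cF_k$.
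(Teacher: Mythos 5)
Your proof is correct and takes essentially the same route as the paper: both bounds are obtained by interpolating the endpoint estimates for $\cE_k$ and for a retraction $\cF_k\circ J_k$ onto $Q_k$, using $\cF_k\circ\cE_k=Id_k$ to conclude. The only difference is the choice of the projection $J_k$ onto $\tQ_k$: the paper takes the $L_2$-orthogonal projection $\Lambda_k$ (whose $H^1$-stability is the cited Bramble--Xu estimate), whereas the classical Scott--Zhang operator you suggest is not defined on all of $L_2(\O)$, so you would need an $L_2$-bounded variant or simply the orthogonal projection.
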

\begin{proof}
   It follows from the estimates \eqref{eq:TrivialRelation1}, \eqref{eq:TrivialRelation2},
    \eqref{eq:cEkHOneEst}, \eqref{eq:cEkLTwoEst} and interpolation
    between Hilbert scales that
\begin{equation*}
  \|\cE_k q\|_{H^s(\O)}\lesssim \db{D_k^s q}{q}^\frac12\qquad\forall\,q\in Q_k.
\end{equation*}
\par
 In order to prove the estimate in the opposite direction, we introduce the operator
 $$J_k=\cF_k\circ\Lambda_k,$$
 where
 $\Lambda_k:L_2(\O)\longrightarrow \tQ_k$ is the orthogonal projection.
 In view of \eqref{eq:RightInverse}, we have
\begin{equation}\label{eq:JkcEk}
  J_k\cE_k q =\cF_k\Lambda_k\cE_kq=\cF_k\cE_kq=q\qquad\forall\,q\in Q_k.
\end{equation}
 Moreover, it follows from \eqref{eq:TrivialRelation1}, \eqref{eq:TrivialRelation2},
 \eqref{eq:cFkHOneEst}, \eqref{eq:cFkLTwoEst}
 and the well-known estimate \cite{BX:1991:LTwo}
\begin{equation*}
  \|\Lambda_k\zeta\|_{H^1(\O)}\lesssim \|\zeta\|_{H^1(\O)}\qquad\forall\,\zeta\in H^1(\O)
\end{equation*}
 that
\begin{alignat*}{4}
   \db{D_k^0J_k \zeta}{J_k \zeta}^\frac12&\approx
    \|J_k \zeta\|_\LT&\lesssim\|\zeta\|_\LT&\qquad&\forall\,\zeta\in \LT,\\
   \db{D_k^1J_k \zeta}{J_k \zeta}^\frac12
   &\approx\|J_k\zeta\|_\PHk&\lesssim \|\zeta\|_{H^1(\O)}&\qquad&\forall\,\zeta\in H^1(\O).
\end{alignat*}
 The two last estimates imply, by interpolation between Hilbert scales,
\begin{equation*}
  \db{ D_k^s J_k\zeta}{J_k\zeta}^\frac12\leq C\|\zeta\|_{H^s(\O)}\qquad\forall\,\zeta\in H^s(\O),
\end{equation*}
 and hence, because of \eqref{eq:JkcEk},
\begin{equation*}
  \db{D_k^s q}{q}^\frac12=\db{D_k^s J_k\cE_kq}{J_k\cE_k q}^\frac12\lesssim
  \|\cE_kq\|_{H^s(\O)}\qquad\forall\,q\in Q_k.
\end{equation*}
\end{proof}
\begin{lemma} \label{lem:DGNormEquivalence}
  For any $s\in[0,\frac12)$, we have
\begin{equation*}
  \db{D_k^sq}{q}\approx |q|_{H^s(\O)}^2\qquad\forall\,q\in Q_k,
\end{equation*}
 where the constants in the norm equivalence depend on $s$.
\end{lemma}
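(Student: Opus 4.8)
The plan is to deduce the equivalence from Lemma~\ref{lem:EnrichDGNormEquivalence}, which already supplies $\db{D_k^s q}{q}\approx\|\cE_k q\|_{H^s(\O)}^2$ for all $q\in Q_k$ and $0\le s\le 1$. Thus it suffices to establish the norm equivalence $\|\cE_k q\|_{H^s(\O)}\approx\|q\|_{H^s(\O)}$ for $q\in Q_k$ and $0\le s<\frac12$. Two ingredients will drive the argument: the fractional approximation bound \eqref{eq:cEkFractionalEst} for $q-\cE_k q$, and the standard inverse estimate $\|w\|_{H^s(\O)}\lesssim h_k^{-s}\|w\|_{L_2(\O)}$, valid for (possibly discontinuous) piecewise polynomials $w$ on $\cT_k$ when $0\le s<\frac12$. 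It is precisely here that the restriction $s<\frac12$ is indispensable, since a discontinuous piecewise polynomial belongs to $H^s(\O)$ only for $s<\frac12$ (and the constants consequently depend on $s$, as recorded in the statement).

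For the upper bound I would write $\|\cE_k q\|_{H^s(\O)}\le\|q\|_{H^s(\O)}+\|q-\cE_k q\|_{H^s(\O)}$ and observe that $q-\cE_k q$ is piecewise polynomial on $\cT_k$, so the fractional inverse estimate yields $\|q-\cE_k q\|_{H^s(\O)}\lesssim h_k^{-s}\|q-\cE_k q\|_{L_2(\O)}$. Combining this with \eqref{eq:cEkFractionalEst}, namely $\|q-\cE_k q\|_{L_2(\O)}\le C_s h_k^{s}\|q\|_{H^s(\O)}$, the powers of $h_k$ cancel and I obtain $\|\cE_k q\|_{H^s(\O)}\lesssim\|q\|_{H^s(\O)}$ with no small parameter required.

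For the lower bound the natural route is through the forgetting operator, using the right-inverse identity \eqref{eq:RightInverse}, so that $q=\cF_k\cE_k q$. The crux is the $H^s$-stability of $\cF_k$ on $\tQ_k$: for $\tq\in\tQ_k$ I would estimate $\|\cF_k\tq\|_{H^s(\O)}\le\|\tq\|_{H^s(\O)}+\|\cF_k\tq-\tq\|_{H^s(\O)}$, then bound the second term by the fractional inverse estimate together with the elementwise approximation property \eqref{eq:ForgettingEst} to get $\|\cF_k\tq-\tq\|_{H^s(\O)}\lesssim h_k^{-s}\bigl(\sum_{T\in\cT_k}\hT^{2}|\tq|_{H^1(T)}^2\bigr)^{1/2}\lesssim h_k^{1-s}|\tq|_{H^1(\O)}$, and finally invoke the inverse estimate $|\tq|_{H^1(\O)}\lesssim h_k^{-(1-s)}\|\tq\|_{H^s(\O)}$ for the conforming finite element function $\tq$. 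The powers of $h_k$ cancel once more, giving $\|\cF_k\tq\|_{H^s(\O)}\lesssim\|\tq\|_{H^s(\O)}$; taking $\tq=\cE_k q$ and using \eqref{eq:RightInverse} yields $\|q\|_{H^s(\O)}\lesssim\|\cE_k q\|_{H^s(\O)}$. Together with the upper bound and Lemma~\ref{lem:EnrichDGNormEquivalence}, this completes the equivalence.

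The main obstacle is the lower bound, and more specifically the $H^s$-stability of the forgetting operator. Unlike the upper bound, one cannot simply control $q-\cE_k q$ by $\|q\|_{H^s(\O)}$ without reintroducing the very quantity to be bounded: the constant in such an estimate is not small, so no absorption is available. Routing the lower bound through $\cF_k$ converts it into an upper-bound estimate on a single operator and thereby sidesteps this absorption difficulty. I also expect the careful justification of the fractional inverse estimate for discontinuous piecewise polynomials, together with its degeneration as $s\uparrow\frac12$, to require the most attention.
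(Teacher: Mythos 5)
Your proposal is correct, and one half of it coincides with the paper's proof: the bound $\db{D_k^sq}{q}^{1/2}\lesssim\|q\|_{H^s(\O)}$ is obtained in the paper exactly as you describe, via Lemma~\ref{lem:EnrichDGNormEquivalence}, the triangle inequality, the nonstandard inverse estimate \eqref{eq:NonStandardInverseEst} applied to $q-\cE_kq$, and \eqref{eq:cEkFractionalEst}. For the opposite direction you genuinely diverge. The paper does not attempt to prove $\|q\|_{H^s(\O)}\lesssim\|\cE_kq\|_{H^s(\O)}$ at all; it writes $\|q\|_{H^s(\O)}\le\|q-\cE_kq\|_{H^s(\O)}+\|\cE_kq\|_{H^s(\O)}$, bounds the first term by $h_k^{-s}\|q-\cE_kq\|_\LT\lesssim h_k^{1-s}\|q\|_\PHk$ using \eqref{eq:NonStandardInverseEst} and \eqref{eq:EnrichingEst}, and then absorbs it into the \emph{target} quantity $\db{D_k^sq}{q}^{1/2}$ via the spectral inverse estimate \eqref{eq:DGNormInverseEst} --- precisely the absorption you (correctly) observe is unavailable against $\|\cE_kq\|_{H^s(\O)}$ alone, but which becomes harmless once the right-hand side is $\db{D_k^sq}{q}^{1/2}$. (In fact, since Lemma~\ref{lem:EnrichDGNormEquivalence} gives $h_k^{1-s}\|q\|_\PHk\lesssim\db{D_k^sq}{q}^{1/2}\approx\|\cE_kq\|_{H^s(\O)}$, your stronger intermediate claim $\|q\|_{H^s(\O)}\lesssim\|\cE_kq\|_{H^s(\O)}$ also follows from the paper's chain, without the forgetting operator.) Your route through $\cF_k$ is nonetheless sound and buys the additional stability estimate $\|\cF_k\tq\|_{H^s(\O)}\lesssim\|\tq\|_{H^s(\O)}$, a natural fractional companion to \eqref{eq:cFkHOneEst}--\eqref{eq:cFkLTwoEst}; the price is that you need two ingredients not literally stated in the paper, namely the nonstandard inverse estimate for the discontinuous piecewise polynomial $\cF_k\tq-\tq$ (not of the form $q-\cE_kq$, though covered by the cited reference of Ben~Belgacem and Brenner) and the inverse estimate $|\tq|_{H^1(\O)}\lesssim h_k^{s-1}\|\tq\|_{H^s(\O)}$ on the conforming space $\tQ_k$, both of which hold on these quasi-uniform meshes but would need to be justified. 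The paper's version is shorter because it reuses only \eqref{eq:DGNormInverseEst} and \eqref{eq:EnrichingEst}, which are already in place.
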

\begin{proof}  Using the (non-standard) inverse estimate \cite{BB:2001:Nonstandard}
\begin{equation}\label{eq:NonStandardInverseEst}
   \|q-\cE_kq\|_{H^s(\O)}\leq C_s h_k^{-s}\|q-\cE_kq\|_\LT\qquad\forall\,q\in Q_k
   \quad\text{and}\quad 0\leq s<\frac12
\end{equation}
 together with \eqref{eq:DGNormInverseEst},
 \eqref{eq:EnrichingEst} and Lemma~\ref{lem:EnrichDGNormEquivalence},
 we find
\begin{align*}
   \|q\|_{H^s(\O)}&\leq \|q-\cE_kq\|_{H^s(\O)}+\|\cE_kq\|_{H^s(\O)}\\
                  &\lesssim h_k^{-s}\|q-\cE_kq\|_{L_2(\O)}+ \db{D_k^sq}{q}^\frac12\\
                  &\lesssim h_k^{1-s}\|q\|_\PHk+\db{D_k^sq}{q}^\frac12
                  \lesssim \db{D_k^sq}{q}^\frac12\qquad\forall\, q\in Q_k.
\end{align*}
\par
 In the other direction we have, by \eqref{eq:cEkFractionalEst},
 Lemma~\ref{lem:EnrichDGNormEquivalence} and \eqref{eq:NonStandardInverseEst},
\begin{align*}
  \db{D_k^sq}{q}^\frac12 &\lesssim \|\cE_kq\|_{H^s(\O)}\\
     &\lesssim \big(\|q\|_{H^s(\O)}+\|q-\cE_kq\|_{H^s(\O)}\big)\\
     &\lesssim \big(\|q\|_{H^s(\O)}+h_k^{-s}\|q-\cE_kq\|_\LT\big)\lesssim \|q\|_{H^s(\O)}\qquad\forall\,q\in Q_k.
\end{align*}
\end{proof}
\par
 Combining \eqref{eq:IntermediateNormEquivalence} and Lemma~\ref{lem:DGNormEquivalence}, we arrive at
 the following result.
\begin{corollary} \label{cor:Comparison}
 For any $s\in [0,\frac12)$, we have
\begin{equation*}
  \|(\bv,q)\|_{s,k}\approx h_k^{1-s}\|\bv\|_\LT+\|q\|_{H^s(\O)}\qquad\forall\,(\bv,q)\in\kES,
\end{equation*}
 where the constants in the norm equivalence depend on $s$.
\end{corollary}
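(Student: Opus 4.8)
The plan is to obtain the corollary as a direct combination of the squared-norm equivalence \eqref{eq:IntermediateNormEquivalence} with the Sobolev characterization of $\db{D_k^s q}{q}$ furnished by Lemma~\ref{lem:DGNormEquivalence}. No new machinery is needed; the work has already been done in establishing those two results, and what remains is bookkeeping together with one elementary scalar inequality.

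Concretely, I would proceed in three short steps. First, invoke \eqref{eq:IntermediateNormEquivalence}, which is valid for the full range $0\leq s\leq 1$, to write
\begin{equation*}
  \|(\bv,q)\|_{s,k}^2\approx h_k^{2(1-s)}\|\bv\|_\LT^2+\db{D_k^s q}{q}\qquad\forall\,(\bv,q)\in\kES.
\end{equation*}
Second, because the hypothesis restricts us to $s\in[0,\tfrac12)$, apply Lemma~\ref{lem:DGNormEquivalence} to replace the second summand by $\|q\|_{H^s(\O)}^2$, giving
\begin{equation*}
  \|(\bv,q)\|_{s,k}^2\approx h_k^{2(1-s)}\|\bv\|_\LT^2+\|q\|_{H^s(\O)}^2\qquad\forall\,(\bv,q)\in\kES.
\end{equation*}
Third, take square roots and invoke the elementary equivalence $\sqrt{a^2+b^2}\approx a+b$ for nonnegative $a,b$, with $a=h_k^{1-s}\|\bv\|_\LT$ and $b=\|q\|_{H^s(\O)}$, to arrive at the asserted equivalence $\|(\bv,q)\|_{s,k}\approx h_k^{1-s}\|\bv\|_\LT+\|q\|_{H^s(\O)}$. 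The $s$-dependence of the constants is inherited directly from that in Lemma~\ref{lem:DGNormEquivalence}.

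There is no genuine obstacle here, since the substance resides in the preceding lemmas. The only points that warrant care are, first, that the restriction $s<\tfrac12$ is not incidental but is forced by Lemma~\ref{lem:DGNormEquivalence}, where both the non-standard inverse estimate \eqref{eq:NonStandardInverseEst} and the fractional enriching estimate \eqref{eq:cEkFractionalEst} degenerate as $s\uparrow\tfrac12$; and second, the apparent seminorm/full-norm mismatch between the statement of Lemma~\ref{lem:DGNormEquivalence} and the full norm $\|q\|_{H^s(\O)}$ appearing here. I would reconcile the latter by noting that the proof of Lemma~\ref{lem:DGNormEquivalence} in fact controls the full $H^s(\O)$ norm in both directions (its two displayed chains bound and are bounded by $\|q\|_{H^s(\O)}$), so the full-norm form used in the corollary is exactly what has been proved.
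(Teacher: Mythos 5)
Your proposal is correct and follows exactly the route the paper intends: the paper presents Corollary~\ref{cor:Comparison} as an immediate consequence of combining \eqref{eq:IntermediateNormEquivalence} with Lemma~\ref{lem:DGNormEquivalence}, which is precisely your three-step argument. Your observation that the proof of Lemma~\ref{lem:DGNormEquivalence} in fact controls the full $H^s(\O)$ norm in both directions is a worthwhile clarification of the seminorm notation in that lemma's statement, but it does not change the substance of the argument.
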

%
\subsection{Another Scale of Mesh-Dependent Norms}\label{subsec:Another}
 In Section~\ref{sec:Analysis}.
 we will use the scale of mesh-dependent norms $\|\cdot\|_{s,k}$
 to analyze the effect of post-smoothing coupled with coarse grid correction.
   In order to analyze the effect of pre-smoothing
 coupled with coarse grid correction,
 we will need a second scale of mesh-dependent norms $\tbar\cdot\tbar_{s,k}$.
\par
 For $1\leq s\leq 2$, we define the mesh-dependent norm $\tbar\cdot\tbar_{s,k}$ by duality:
\begin{equation}\label{eq:SecondMDNorms}
  \tbar \vq\tbar_{s,k}=\sup_{\wr\in\FE{k}}
  \frac{\cB\big(\vq,\wr\big)}{\|\wr\|_{2-s,k}} \qquad\forall\,\vq\in \FE{k}.
\end{equation}
 It follows from \eqref{eq:StabilityEstimate}, \eqref{eq:OneEquivalence} and
 \eqref{eq:SecondMDNorms} that
\begin{equation}\label{eq:OneEquivalence2}
  \tbar\vq\tbar_{1,k}\approx \|\bv\|_\LT+\|q\|_\PHk \approx \|\vq\|_{1,k}
  \qquad\forall\,\vq\in \FE{k}.
\end{equation}
\par
 Note that the two scales of mesh-dependent norms together provide a generalized Cauchy-Schwarz inequality
 for the bilinear form $\cB(\cdot,\cdot)$:
\begin{equation}\label{eq:GCSInequality}
  \cB(\vq,\wr\big)\leq \tbar \vq\tbar_{1+\tau,k}\|\wr\|_{1-\tau,k}
\end{equation}
 for all $\vq,\wr\in\FE{k}$ and $0\leq\tau\leq 1$.
\section{Convergence Analysis}\label{sec:Analysis}
 In this section we will carry out the convergence analysis for the $W$-cycle algorithm, which
 is based on the smoothing and approximation properties \cite{BD:1981:WCycle,Hackbusch:1985:MMA}
 with respect to the mesh-dependent norms in Section~\ref{sec:MeshDependentNorms}.
 Once we have established these properties with respect to the scale of mesh-dependent norms defined
 in Section~\ref{subsec:MDNorms}, the analysis will proceed as in \cite[Section~5.3]{BLS:2014:StokesLame}.
\par
 Numerical results indicate that the $V$-cycle algorithm is also uniformly convergent in the
 nonconforming energy norm.
 But we will not consider the much more involved convergence analysis of the $V$-cycle algorithm in this paper.
\subsection{Smoothing and Approximation Properties}\label{subsec:Smoothing}
 Since the post-smoothing step in \eqref{eq:PostSmoothing} is just the Richardson
 relaxation for the SPD problem \eqref{eq:SPDProblem} and the operator
 $\BSB$ behaves like a typical SPD operator for second order problems
 (cf. \eqref{eq:SpectralRadius}),
 we have a standard smoothing property whose proof is identical to
 that of \cite[Lemma~5.1]{BLS:2014:StokesLame}.
\begin{lemma}\label{lem:SmoothingProperty}
 The estimate
\begin{equation}\label{eq:SmoothingProperty}
 \|R_k^m(\bv,q)\|_{1,k}\lesssim h_k^{-\tau}m^{-\tau/2}\|(\bv,q)\|_{1-\tau,k}\qquad\forall\,(\bv,q)\in\FE{k}
\end{equation}
 holds for $\tau\in[0,1]$.
\end{lemma}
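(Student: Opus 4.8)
The plan is to exploit the fact that $\BSB$ is self-adjoint and positive definite with respect to the mesh-dependent inner product $[\cdot,\cdot]_k$, so that the entire estimate reduces to a scalar inequality through spectral calculus. First I would record that $\Bk$ is self-adjoint with respect to $[\cdot,\cdot]_k$ by \eqref{eq:BkDef} and the symmetry of $\cB(\cdot,\cdot)$, and that $\Sk$ is SPD by \eqref{eq:bSkDef}; hence $\BSB=\Bk\Sk\Bk$ is SPD with respect to $[\cdot,\cdot]_k$ and admits an eigenbasis $\{\phi_i\}$, orthonormal with respect to $[\cdot,\cdot]_k$, with positive eigenvalues $\{\lambda_i\}$. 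By the definition \eqref{eq:MDNorms} of the mesh-dependent norms, writing $\vq=\sum_i c_i\phi_i$ gives $\|\vq\|_{s,k}^2=\sum_i\lambda_i^s c_i^2$, while $R_k=Id_k-\delta_k\BSB$ from \eqref{eq:RkDef} yields $R_k^m\vq=\sum_i(1-\delta_k\lambda_i)^m c_i\phi_i$.

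Expanding both sides of \eqref{eq:SmoothingProperty} in this eigenbasis, it suffices to prove the eigenvalue-by-eigenvalue bound
$$\lambda_i(1-\delta_k\lambda_i)^{2m}\lesssim h_k^{-2\tau}m^{-\tau}\lambda_i^{1-\tau},$$
that is, $\lambda_i^\tau(1-\delta_k\lambda_i)^{2m}\lesssim h_k^{-2\tau}m^{-\tau}$. At this point I would invoke that the damping factor is chosen of the form $\delta_k=Ch_k^2$ with $\delta_k\rho(\BSB)\leq1$ (see \eqref{eq:SpectralRadius} and \eqref{eq:DampingFactor}), so that $x_i:=\delta_k\lambda_i\in(0,1]$ and $\lambda_i^\tau=x_i^\tau\delta_k^{-\tau}\approx x_i^\tau h_k^{-2\tau}$. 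The desired bound then reduces to the elementary scalar inequality $x^\tau(1-x)^{2m}\lesssim m^{-\tau}$ for $x\in(0,1]$ and $\tau\in[0,1]$.

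Finally I would dispose of this scalar inequality. Since $g(x)=x(1-x)^{2m}$ attains its maximum on $[0,1]$ at $x=1/(2m+1)$ with $g(x)\lesssim 1/m$, and since $(1-x)^{2m}\leq1$, one has
$$x^\tau(1-x)^{2m}=\big[x(1-x)^{2m}\big]^\tau\big[(1-x)^{2m}\big]^{1-\tau}\leq\big[x(1-x)^{2m}\big]^\tau\lesssim m^{-\tau},$$
which closes the argument. I do not expect a genuine obstacle: this is the standard smoothing property for a Richardson relaxation applied to an SPD operator whose spectral radius scales like $h_k^{-2}$. The only points demanding care are verifying that $\BSB$ is SPD so that the spectral decomposition and the fractional-power norm \eqref{eq:MDNorms} are legitimate, and correctly tracking the factor $\delta_k^{-\tau}\approx h_k^{-2\tau}$ that, after taking square roots, produces the $h_k^{-\tau}$ in the unsquared estimate.
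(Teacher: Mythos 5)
Your proposal is correct and is essentially the argument the paper intends: the paper simply remarks that the post-smoothing step is Richardson relaxation for the SPD problem \eqref{eq:SPDProblem} with $\rho(\BSB)\lesssim h_k^{-2}$ and defers to the standard spectral-calculus proof in the cited reference, which is exactly your eigenbasis expansion combined with the scalar bound $x^\tau(1-x)^{2m}\lesssim m^{-\tau}$ on $(0,1]$. Your care in checking that $\BSB$ is SPD (via the symmetry of $\cB$ and Lemma~\ref{lem:BkSkBk}) and in tracking the factor $\delta_k^{-\tau}\approx h_k^{-2\tau}$ is exactly what is needed.
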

\par
 The following approximation property is based on Corollary~\ref{cor:Comparison} and a duality argument.
\begin{lemma}\label{lem:ApproximationProperty}
 We have
\begin{equation*}
  \|(\CGC)\vq\|_{1-\alpha,k}\lesssim h_k^\alpha\|(\bv,q)\|_{1,k}
  \qquad\forall\,(\bv,q)\in \FE{k},
\end{equation*}
 where $\alpha\in(\frac12,1]$ is the index of elliptic regularity that appears
 in \eqref{eq:EllipticRegularityEst}.
\end{lemma}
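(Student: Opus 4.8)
The plan is to prove the approximation property by a duality argument built on the elliptic regularity estimate \eqref{eq:EllipticRegularityEst}, transferred to the discrete setting via Corollary~\ref{cor:Comparison}. Write $(\bm{e},\epsilon)=(\CGC)\vq$. Since $1-\alpha\in[0,\frac12)$, Corollary~\ref{cor:Comparison} tells us that
\begin{equation*}
  \|(\bm{e},\epsilon)\|_{1-\alpha,k}\approx h_k^\alpha\|\bm{e}\|_\LT+\|\epsilon\|_{H^{1-\alpha}(\O)},
\end{equation*}
so the task splits into two pieces. The first piece, $h_k^\alpha\|\bm{e}\|_\LT$, is harmless: by the stability of $I_{k-1}^kP_k^{k-1}$ in $\|\cdot\|_{1,k}$ (Lemma~\ref{lem:Stability}) together with \eqref{eq:OneEquivalence}, we have $\|\bm{e}\|_\LT\lesssim \|(\bm{e},\epsilon)\|_{1,k}\lesssim\|\vq\|_{1,k}$, which already carries the factor $h_k^\alpha$ we want. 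The real content is to bound the scalar term $\|\epsilon\|_{H^{1-\alpha}(\O)}$ by $h_k^\alpha\|\vq\|_{1,k}$.

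First I would estimate $\|\epsilon\|_{H^{1-\alpha}(\O)}$ by duality against $H^{\alpha-1}(\O)=\big(H^{1-\alpha}_0(\O)\big)'$; since $\alpha>\frac12$, one has $1-\alpha<\frac12$ and the relevant Sobolev space has no boundary subtleties. For a test datum $F\in H^{-1+\alpha}(\O)$, let $(\bm{\psi},\phi)\in[\LT]^d\times H^1_0(\O)$ solve the continuous auxiliary problem \eqref{eq:ADarcy1}--\eqref{eq:ADarcy2} with right-hand side $F$, so that \eqref{eq:EllipticRegularityEst} gives $\|\bm{\psi}\|_{H^\alpha(\O)}+\|\phi\|_{H^{1+\alpha}(\O)}\lesssim\|F\|_{H^{-1+\alpha}(\O)}$. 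Pairing $\epsilon$ against $F$ and using that $\cB$ represents this solution, the key is to rewrite $F(\epsilon)=\cB\big((\bm{\psi},\phi),(\bm{e},\epsilon)\big)$ and then exploit the Galerkin orthogonality \eqref{eq:GalerkinOrthogonality}: because $(\bm{e},\epsilon)=(\CGC)\vq$ lies in the range of $Id_k-I_{k-1}^kP_k^{k-1}$, we may subtract any coarse-grid interpolant of $(\bm{\psi},\phi)$ from the test function without changing the pairing. This converts $F(\epsilon)$ into $\cB$ evaluated against the \emph{interpolation error} of the regular solution $(\bm{\psi},\phi)$, at which point the boundedness \eqref{eq:BBdd} of $\cB$ and the interpolation estimates \eqref{eq:RTInterpolationEst1}, \eqref{eq:RTInterpolationEst2} and \eqref{eq:DGInterpolationEst} with smoothness index $s=\alpha$ supply a factor $h_k^\alpha\big(\|\bm{\psi}\|_{H^\alpha(\O)}+\|\phi\|_{H^{1+\alpha}(\O)}\big)\lesssim h_k^\alpha\|F\|_{H^{-1+\alpha}(\O)}$, with the remaining factor controlled by $\|(\bm{e},\epsilon)\|_{1,k}\lesssim\|\vq\|_{1,k}$.

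The main obstacle, and the step I would handle most carefully, is the mismatch of function spaces at the continuous level: the vector variable $\bm{\psi}$ of the auxiliary problem lives only in $[\LT]^d$ (not $\HDiv$), so the term $b'(\cdot,\cdot)$ and its integration-by-parts identity \eqref{eq:IBP} must be invoked in exactly the nonconforming sense justified in Remark~\ref{rem:RHS}, with the duality pairing $\langle\div\bv,q\rangle_{H^{-1+\alpha}\times H^{1-\alpha}}$. One must verify that $(\bm{e},\epsilon)$, which is discrete and only piecewise smooth, pairs legitimately against the $H^\alpha$-regular $(\bm{\psi},\phi)$ in the broken energy norm, so that the Galerkin orthogonality applies to the right quantity. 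Once the pairing is set up consistently and the interpolants $\Pi_k\bm{\psi}$ and $\cI_k\phi$ are inserted, the estimate is routine; combining the scalar and vector bounds and taking the supremum over $F$ then yields $\|(\CGC)\vq\|_{1-\alpha,k}\lesssim h_k^\alpha\|\vq\|_{1,k}$, as claimed.
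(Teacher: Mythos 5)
Your proposal is correct and follows essentially the same route as the paper: the splitting via Corollary~\ref{cor:Comparison}, the bound on the vector component through Lemma~\ref{lem:Stability} and \eqref{eq:OneEquivalence}, and a duality argument for the scalar component based on \eqref{eq:EllipticRegularityEst}, Remark~\ref{rem:RHS} and the Galerkin orthogonality \eqref{eq:GalerkinOrthogonality}. The only (inessential) difference is that you subtract a coarse-grid \emph{interpolant} of the auxiliary solution and invoke the interpolation estimates \eqref{eq:RTInterpolationEst1}, \eqref{eq:RTInterpolationEst2}, \eqref{eq:DGInterpolationEst} directly, whereas the paper subtracts the level-$(k-1)$ \emph{Galerkin approximation} and cites the already-established discretization error estimate \eqref{eq:ConcreteError}, which lets it apply Galerkin orthogonality a second time to land on $\|\vq\|_{1,k}$ rather than passing through $\|(\CGC)\vq\|_{1,k}$ and Lemma~\ref{lem:Stability}.
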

\begin{proof} Let $\vq\in \FE{k}$ be arbitrary and
   $(\bzeta,\mu)=(\CGC)\vq$.
 In view of Corollary~\ref{cor:Comparison}, it suffices to show that
\begin{equation}\label{eq:Approximation1}
    h_k^{\alpha}\|\bzeta\|_{L_2(\O)}+\|\mu\|_{H^{1-\alpha}(\O)}\lesssim h^\alpha
   \|\vq\|_{1,k}.
\end{equation}
\par
 The estimate for $\bzeta$  follows immediately  from \eqref{eq:OneEquivalence} and
 Lemma~\ref{lem:Stability}:
\begin{equation}\label{eq:Approximation2}
  h_k^{\alpha}\|\bzeta\|_{L_2(\O)}\lesssim h_k^{\alpha}\|(\bzeta,\mu)\|_{1,k}\lesssim h^{\alpha}\|(\bv,q)\|_{1,k}.
\end{equation}
\par
 The estimate for $\mu$ is established through a duality argument.
 Let $\phi\in H^{-1+\alpha}(\O)$ and $(\bxi,\theta)\in [L_2(\O)]^d\times H^1_0(\O)$ satisfy
 \eqref{eq:ADarcy1}--\eqref{eq:ADarcy2} with $F$ replaced by $\phi$.
 Then we have
\begin{equation}\label{eq:Approximation4}
   \cB\big((\bxi,\theta),\wr\big)=\phi(r)\qquad\forall\,\wr\in \FE{k}
\end{equation}
 by Remark~\ref{rem:RHS}.  Moreover, if we define
 $(\bxi_{k-1},\theta_{k-1})\in \FE{k-1}$ by
\begin{equation}\label{eq:Approximation3}
  \cB\big((\bxi_{k-1},\theta_{k-1}),\wr\big)=\phi(r) \qquad\forall\,\wr\in\FE{k-1},
\end{equation}
 then
\begin{equation}\label{eq:Approximation5}
  \|\bxi-\bxi_{k-1}\|_\PLTk+\|\theta-\theta_{k-1}\|_\PHk\lesssim h_k^\alpha \|\phi\|_{H^{-1+\alpha}(\O)}
\end{equation}
 by the discretization error estimate \eqref{eq:ConcreteError}, since $h_{k-1}\approx h_k$.
\par
 It follows from \eqref{eq:NormEquivalence},
 \eqref{eq:BBdd}, \eqref{eq:GalerkinOrthogonality}, \eqref{eq:OneEquivalence} and
 \eqref{eq:Approximation3}--\eqref{eq:Approximation5} that
\begin{align*}
  \phi(\mu)&=\cB\big((\bxi,\theta),(\bzeta,\mu)\big)\\
      &=\cB\big((\bxi,\theta),(\CGC)(\bv,q)\big)\\
      &=\cB\big((\bxi,\theta)-(\bxi_{k-1},\theta_{k-1}),(\CGC)(\bv,q)\big)\\
      &=\cB\big((\bxi,\theta)-(\bxi_{k-1},\theta_{k-1}),(\bv,q)\big)\\
      &\lesssim (\|\bxi-\bxi_{k-1}\|_\PLTk+\|\theta-\theta_{k-1}\|_\PHk)
       (\|\bv\|_\PLTk+\|q\|_\PHk)\\
      &\lesssim h_k^\alpha \|\phi\|_{H^{-1+\alpha}(\O)}\|(\bv,q)\|_{1,k}
\end{align*}
 and hence, by duality,
\begin{equation}\label{eq:Approximation6}
  \|\mu\|_{H^{1-\alpha}(\O)}=\sup_{\phi\in H^{-1+\alpha}(\O)}
  \frac{\phi(\mu)}{\|\phi\|_{H^{-1+\alpha}(\O)}}\lesssim h_k^\alpha\|(\bv,q)\|_{1,k}.
\end{equation}
\par
 The estimate \eqref{eq:Approximation1} follows from \eqref{eq:Approximation2} and
 \eqref{eq:Approximation6}.
\end{proof}
\subsection{Convergence of the Two-Grid Algorithm}\label{subsec:TwoGrid}
 In the two-grid algorithm the coarse grid residual equation is
 solved exactly.  We can therefore set $E_{k-1}=0$ in \eqref{eq:MGRecursion} to
 obtain the error propagation of the two-grid algorithm, which is given by
 $R_k^{m_2}(\CGC)S_k^{m_1}$.
\par
 For $m\geq 1$,
 we have the following estimate on the
 effect of post-smoothing coupled with coarse grid correction
 by
 combining Lemma~\ref{lem:SmoothingProperty} and Lemma~\ref{lem:ApproximationProperty}.
\begin{equation} \label{eq:PostCoarse}
  \|R_k^{m}(\CGC)\vq\|_{1,k}\lesssim
     m^{-\alpha/2}\|\vq\|_{1,k}\qquad\forall\,\vq\in\FE{k}
\end{equation}
\par
 Using \eqref{eq:StabilityEstimate}, \eqref{eq:RitzProjection}, \eqref{eq:AdjointRelation},
 \eqref{eq:SecondMDNorms}, \eqref{eq:OneEquivalence2} and \eqref{eq:PostCoarse},
 we then obtain the following estimate on the
 effect of pre-smoothing coupled with coarse grid correction, where $m\geq1$.
\begin{equation}\label{eq:CoarsePre}
  \tbar (\CGC)S_k^m\vq\tbar_{1,k}\lesssim m^{-\alpha/2}\tbar\vq\tbar_{1,k}\qquad\forall\,\vq\in\FE{k}
\end{equation}
\par
 Therefore,
 for $m_1,m_2\geq 1$, we have
\begin{align}\label{eq:PostCoarsePre}
   &\|R_k^{m_2}(\CGC)S_k^{m_1}\vq\|_{1,k}\notag\\
   &\hspace{50pt}=\|R_k^{m_2}(\CGC)(\CGC)S_k^{m_1}\vq\|_{1,k}\\
   &\hspace{50pt}\lesssim (m_1m_2)^{-\alpha/2}\|\vq\|_{1,k}\hspace{100pt}\forall\,\vq\in\FE{k}\notag
\end{align}
 by \eqref{eq:Projection}, \eqref{eq:OneEquivalence2},
 \eqref{eq:PostCoarse} and \eqref{eq:CoarsePre}.
\begin{remark}\label{rem:Details}
  Since the estimates \eqref{eq:PostCoarse}--\eqref{eq:PostCoarsePre} are identical to the estimates
  (5.7)--(5.9) in \cite{BLS:2014:StokesLame}, we keep the arguments brief here
  and refer to \cite[Section~5]{BLS:2014:StokesLame} for the details.
\end{remark}
\par
 Putting \eqref{eq:PostCoarse}--\eqref{eq:PostCoarsePre} together, we arrive at the
 estimate
\begin{equation}\label{eq:TwoGridEst}
  \|R_k^{m_2}(\CGC)S_k^{m_1}\vq\|_{1,k}\leq C_* [\max(1,m_1)\max(m_2,1)]^{-\alpha/2}
    \|\vq\|_{1,k}
\end{equation}
 for all $\vq\in \FE{k}$ and $k\geq1$.  Thus the two-grid algorithm is a contraction if
 $\max(1,m_1)\max(m_2,1)$ is sufficiently large.
%
\subsection{Convergence of the $W$-Cycle Algorithm}\label{subsec:WCycle}
 The estimate \eqref{eq:TwoGridEst} and a perturbation argument lead to the following
 result for the $W$-cycle algorithm, whose proof is identical to that of
 \cite[Theorem~5.5]{BLS:2014:StokesLame}.
\begin{theorem}\label{thm:WCycle}
  Let $E_k$ be the error propagation operator for the $k$-th level $W$-cycle
  algorithm.
  For any $C_\dag>C_*$ $($the constant in \eqref{eq:TwoGridEst}$)$,
 there exists a positive number $m_*$ $($independent of $k)$ such that
\begin{equation*}
  \|E_k\vq\|_{1,k}\leq C_\dag\big(\max(1,m_1)\max(1,m_2)\big)^{-\alpha/2}
  \|\vq\|_{1,k}
\end{equation*}
 for all $\vq\in\FE{k}$ and $k\geq1$,
 provided $\max(1,m_1)\max(1,m_2)\geq m_*$.
\end{theorem}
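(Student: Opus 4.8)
The plan is to prove Theorem~\ref{thm:WCycle} by a perturbation argument built on the two-grid estimate \eqref{eq:TwoGridEst}, carried out by induction on $k$. Write $\beta=\big(\max(1,m_1)\max(1,m_2)\big)^{-\alpha/2}$ and let $\gamma_k$ denote the operator norm of $E_k$ with respect to $\|\cdot\|_{1,k}$. Using the recursion \eqref{eq:MGRecursion} with $p=2$, I would split the error propagation operator as
\[
  E_k=R_k^{m_2}(\CGC)S_k^{m_1}+R_k^{m_2}I_{k-1}^kE_{k-1}^2P_k^{k-1}S_k^{m_1},
\]
where the first summand is exactly the two-grid error propagation operator, controlled by $C_*\beta$ via \eqref{eq:TwoGridEst}. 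The goal is then to bound the second (perturbation) summand by $C_1\gamma_{k-1}^2\|\vq\|_{1,k}$ for a constant $C_1$ independent of $k$, $m_1$ and $m_2$, which yields the quadratic recursion $\gamma_k\le C_*\beta+C_1\gamma_{k-1}^2$.

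To estimate the perturbation term I would peel off its factors from the outside in. The post-smoother $R_k=Id_k-\delta_k\BSB$ commutes with $\BSB$, and by the damping condition \eqref{eq:DampingFactor} all its eigenvalues lie in $[0,1]$ in the energy norm $\|\vq\|_{1,k}^2=[\BSB\vq,\vq]_k$; hence $\|R_k^{m_2}\|_{1,k}\le1$. Combined with the $\|\cdot\|_{1,k}$-stability of $I_{k-1}^k$ (Lemma~\ref{lem:Stability}), this reduces the bound to $\|E_{k-1}^2P_k^{k-1}S_k^{m_1}\vq\|_{1,k-1}$. Applying the inductive hypothesis $\|E_{k-1}\,\cdot\,\|_{1,k-1}\le\gamma_{k-1}\|\cdot\|_{1,k-1}$ twice and then the stability of $P_k^{k-1}$ (Lemma~\ref{lem:Stability}) leaves only a uniform bound $\|S_k^{m_1}\vq\|_{1,k}\lesssim\|\vq\|_{1,k}$ on the pre-smoother to be supplied.

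The uniform boundedness of $S_k^{m_1}$ is the crux of the argument, and it cannot be read off in $\|\cdot\|_{1,k}$ directly, since there it is $R_k$ rather than $S_k$ that contracts. Here I would pass to the dual scale $\tbar\cdot\tbar_{1,k}$. Because $S_k$ is the $\cB$-adjoint of $R_k$, the symmetry of $\cB$ together with the adjoint relation \eqref{eq:AdjointRelation}, the generalized Cauchy–Schwarz inequality \eqref{eq:GCSInequality} (with $\tau=0$), and $\|R_k\|_{1,k}\le1$ give
\[
  \tbar S_k\vq\tbar_{1,k}=\sup_{\wr\in\FE{k}}\frac{\cB(\vq,R_k\wr)}{\|\wr\|_{1,k}}\le\tbar\vq\tbar_{1,k},
\]
so $S_k$ is a contraction in $\tbar\cdot\tbar_{1,k}$ and therefore $\tbar S_k^{m_1}\vq\tbar_{1,k}\le\tbar\vq\tbar_{1,k}$. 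The norm equivalence \eqref{eq:OneEquivalence2} then converts this into $\|S_k^{m_1}\vq\|_{1,k}\lesssim\|\vq\|_{1,k}$, which is exactly what is needed to close the bound $C_1\gamma_{k-1}^2$ on the perturbation term. This step—the interplay between the two norm scales—is the main obstacle, and it is the reason the dual norm $\tbar\cdot\tbar_{1,k}$ was introduced in Section~\ref{subsec:Another}.

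With the quadratic recursion $\gamma_k\le C_*\beta+C_1\gamma_{k-1}^2$ established, the induction closes in the standard way. The base case is $\gamma_0=0$, since a direct solve gives $E_0=0$, so $\gamma_1\le C_*\beta$; and assuming $\gamma_{k-1}\le C_\dag\beta$ one gets $\gamma_k\le\beta\,(C_*+C_1C_\dag^2\beta)$, which is $\le C_\dag\beta$ as soon as $C_1C_\dag^2\beta\le C_\dag-C_*$. Since $C_\dag>C_*$ by hypothesis, the right-hand side is positive, so the inequality holds whenever $\beta$ is sufficiently small, i.e.\ whenever $\max(1,m_1)\max(1,m_2)\ge m_*$ for a suitable $m_*$ independent of $k$. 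This is precisely the asserted contraction estimate for all $\vq\in\FE{k}$ and $k\ge1$.
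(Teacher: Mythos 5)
Your proposal is correct and follows essentially the same route as the paper, which proves Theorem~\ref{thm:WCycle} by exactly this perturbation argument (deferring the details to the identical proof of Theorem~5.5 in \cite{BLS:2014:StokesLame}): split $E_k$ via \eqref{eq:MGRecursion} into the two-grid operator plus the perturbation $R_k^{m_2}I_{k-1}^kE_{k-1}^2P_k^{k-1}S_k^{m_1}$, bound the latter by $C_1\gamma_{k-1}^2$ using $\|R_k\|_{1,k}\le1$, Lemma~\ref{lem:Stability}, and the uniform bound on $S_k^{m_1}$ obtained through the dual scale $\tbar\cdot\tbar_{1,k}$ and \eqref{eq:OneEquivalence2}, and close the quadratic recursion $\gamma_k\le C_*\beta+C_1\gamma_{k-1}^2$ for $\beta$ small. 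Your handling of the pre-smoother via the adjoint relation \eqref{eq:AdjointRelation} and the symmetry of $\cB$ is exactly the intended use of the second norm scale.
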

\par
 Therefore, if $\max(1,m_1)\max(1,m_2)$ (independent of $k$)
 is sufficiently large, then the $W$-cycle algorithm is a contraction with respect to the nonconforming energy norm
 and the contraction number is bounded away from $1$ for $k\geq1$, i.e., the $W$-cycle algorithm
 converges uniformly.
\section{General Second Order Elliptic Problems}\label{sec:General}
 In this section we extend the multigrid results for the Darcy system to general
 second order elliptic problems of the form
\begin{equation}\label{eq:GeneralBVP}
  -\div\big(\bA\nabla p)+\bbeta\cdot\nabla p+\gamma p=f\quad\text{in}\;\O \quad \text{and} \quad
    p=g\quad\text{on}\;\p\O,
\end{equation}
 which include the Darcy system \eqref{eq:SDarcy1}--\eqref{eq:SDarcy3} as a special case,
 together with the adjoint problems
\begin{equation}\label{eq:AdjointBVP}
  -\div\big(\bA\nabla p)-\div(\bbeta p)+\gamma p=f\quad\text{in}\;\O \quad\text{and}
   \quad p=g \quad\text{on}\;\p\O.
\end{equation}
\par
 For the design and analysis of multigrid methods, it suffices to consider the case where $g=0$.
 We assume that $\bA$ is a (sufficiently) smooth SPD $d\times d$ matrix function on $\bar\O$,
 $\bbeta\in [W^1_\infty(\O)]^d$ and $\gamma\in L_\infty(\O)$.  We also assume that
 the boundary value problems \eqref{eq:GeneralBVP} and \eqref{eq:AdjointBVP} are both well-posed,
 which is the case if, for example,
\begin{equation}\label{eq:CoefficientCondition}
  \gamma-\frac12\div\bbeta\geq 0 \quad \text{a.e. in}\;\O.
\end{equation}
\subsection{Finite Element Methods}\label{subsec:NewFEM}
 The mixed finite element method for \eqref{eq:GeneralBVP} is to find
 $(\bu_h,p_h)\in\ESh$ such that
\goodbreak
\begin{alignat}{3}
  a(\bu_h,\bv)+b(\bv,p_h)&=0&\qquad&\forall\,\bv\in V_h,\label{eq:GRT1}\\
  b(\bu_h,q)-c_h(p_h,q)&=F(q)&\qquad&\forall\,q\in Q_h,\label{eq:GRT2}
\end{alignat}
 where the finite element space $\ESh$, the bilinear forms $a(\cdot,\cdot)$,
 $b(\cdot,\cdot)$ and the bounded linear functional $F$
 are identical to the ones for the Darcy system, and the mesh-dependent bilinear form
 $c_h(\cdot,\cdot)$ is defined by
\begin{equation}\label{eq:chDef}
  c_h(r,q)=\int_\O (\gamma r+\bbeta\cdot\nabla_h r)q\,dx \qquad\forall\,q,r\in Q_h.
\end{equation}
 Here $\nabla_h$ is the piecewise defined gradient operator.
\par
\par
 We will treat \eqref{eq:GRT1}--\eqref{eq:GRT2} as a nonconforming method for the
 following weak formulation of \eqref{eq:GeneralBVP}:  Find $(\bu,p)\in [\LT]^d\times H^1_0(\O)$
 such that
\begin{alignat}{3}
    a(\bu,\bv)+b'(\bv,p)&=0&\qquad&\forall\,\bv\in [\LT]^d,\label{eq:General1}\\
    b'(\bu,q)-c(p,q)&=F(q)&\qquad&\forall\,q\in H^1_0(\O),\label{eq:General2}
\end{alignat}
 where the bilinear form $b'(\cdot,\cdot)$ is identical to the one in \eqref{eq:ADarcy1}--\eqref{eq:ADarcy2}
 and
    $$c(r,q)=\int_\O(\gamma r+\bbeta\cdot\nabla r)q\,dx.$$
\par
 Similarly, the mixed finite element method for the adjoint problem \eqref{eq:AdjointBVP} is to find
 $(\bu_h,p_h)\in\ESh$ such that
\begin{alignat}{3}
  a(\bu_h,\bv)+b(\bv,p_h)&=0&\qquad&\forall\,\bv\in V_h,\label{eq:AdjointRT1}\\
  b(\bu_h,q)-c_h(q,p_h)&=F(q)&\qquad&\forall\,q\in Q_h,\label{eq:AdjointRT2}
\end{alignat}
 and it can be treated as a nonconforming method for the
 following weak formulation of \eqref{eq:AdjointBVP}:  Find $(\bu,p)\in [\LT]^d\times H^1_0(\O)$
 such that
\begin{alignat}{3}
    a(\bu,\bv)+b'(\bv,p)&=0&\qquad&\forall\,\bv\in [\LT]^d,\label{eq:Adjoint1}\\
    b'(\bu,q)-c(q,p)&=F(q)&\qquad&\forall\,q\in H^1_0(\O).\label{eq:Adjoint2}
\end{alignat}
\begin{remark}\label{rem:NewFEM}
  The discretizations \eqref{eq:GRT1}--\eqref{eq:GRT2} and
  \eqref{eq:AdjointRT1}--\eqref{eq:AdjointRT2} for the convection-diffusion-reaction problem
  \eqref{eq:GeneralBVP} and the advection-diffusion-reaction problem \eqref{eq:AdjointBVP} are
  different from the mixed finite
  element methods in \cite{DR:1982:Mixed} which are based on $\ES$ formulations.
  Instead, they are related to the upwind mixed finite element methods in \cite{Jaffre:1984:CD}.
\end{remark}
\begin{remark}\label{rem:SimilarProperties}
 Note that
 the systems \eqref{eq:General1}--\eqref{eq:General2} and
 \eqref{eq:Adjoint1}--\eqref{eq:Adjoint2} are well-posed for $F\in H^{-s}(\O)$
 ($0\leq s\leq 1$) and the elliptic regularity estimate \eqref{eq:EllipticRegularityEst} remains valid.
 Remark~\ref{rem:RHS} also holds for these problems.
\end{remark}
\subsection{Stability and Error Estimates}\label{subsec:GeneralSTabilityEstimate}
 Let $\cB_h(\cdot,\cdot)$ be the bilinear form on $\ESh$ defined by
\begin{equation}\label{eq:cBhDef}
  \cB_h\big(\vq,\wr\big)=a(\bv,\bw)+b(\bw,q)+b(\bv,r)-c_h(q,r).
\end{equation}
\begin{lemma}\label{lem:GeneralStabilityEst1}
  The stability estimate
\begin{align}\label{eq:GeneralStabilityEst1}
 &\|\bv\|_\PLT+\|q\|_\PH \\
    &\hspace{50pt}\approx\sup_{\wr\in\ESh}\frac{\cB_h(\vq,\wr\big)}{\|\bw\|_\PLT+\|r\|_\PH}
     \qquad \forall\,\vq\in\ESh\notag
\end{align}
 holds for sufficiently small $h$.
\end{lemma}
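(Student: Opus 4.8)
The plan is to establish the stability estimate \eqref{eq:GeneralStabilityEst1} by treating $\cB_h(\cdot,\cdot)$ as a perturbation of the bilinear form $\cB(\cdot,\cdot)$ for the Darcy system, whose stability estimate \eqref{eq:StabilityEstimate} is already in hand. The key observation is that $\cB_h$ differs from $\cB$ only through the extra term $-c_h(q,r)$, so I would write
\begin{equation*}
  \cB_h\big(\vq,\wr\big)=\cB\big(\vq,\wr\big)-c_h(q,r),
\end{equation*}
and then show that the perturbation $c_h(q,r)$ is small in the nonconforming energy norm relative to the leading Darcy part, at least once $h$ is sufficiently small.

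\medskip\noindent
First I would bound the perturbation. Since $\bbeta\in[W^1_\infty(\O)]^d$ and $\gamma\in L_\infty(\O)$, the definition \eqref{eq:chDef} gives, by Cauchy--Schwarz applied element by element,
\begin{equation*}
  c_h(q,r)\lesssim\Big(\TSum\|q\|_{L_2(T)}^2\Big)^{1/2}\|r\|_\LT+\Big(\TSum\|\nabla q\|_{L_2(T)}^2\Big)^{1/2}\|r\|_\LT,
\end{equation*}
so that $|c_h(q,r)|\lesssim\|q\|_\PH\,\|r\|_\LT\le\|q\|_\PH\,\|r\|_\PH$. The crucial point is that the first factor controlling $q$ involves the full $\PH$-norm (which includes the gradient), while the second factor controlling $r$ involves only the weaker $\LT$-norm. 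This asymmetry is what lets a duality/perturbation argument absorb the term.

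\medskip\noindent
The main step is then the perturbation argument itself. Given $\vq\in\ESh$, the Darcy stability \eqref{eq:StabilityEstimate} furnishes a test pair $\wr\in\ESh$ with $\|\bw\|_\PLT+\|r\|_\PH\approx\|\bv\|_\PLT+\|q\|_\PH$ and $\cB\big(\vq,\wr\big)\gtrsim(\|\bv\|_\PLT+\|q\|_\PH)^2$. For this same $\wr$ I would estimate $\cB_h\big(\vq,\wr\big)=\cB\big(\vq,\wr\big)-c_h(q,r)$ and try to show the lower-order term cannot destroy the coercivity. \textbf{The hard part will be} that the naive bound $|c_h(q,r)|\lesssim\|q\|_\PH\|r\|_\PH$ is of the \emph{same} order as the main term and therefore does not vanish as $h\downarrow 0$; a genuine Aubin--Nitsche-style duality argument is needed to gain a factor of $h^\alpha$. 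Concretely, I expect one must invoke the well-posedness hypothesis \eqref{eq:CoefficientCondition} (or equivalently the well-posedness of \eqref{eq:General1}--\eqref{eq:General2}) together with the elliptic regularity \eqref{eq:EllipticRegularityEst} to show that the $L_2$-type norm of the scalar component is controlled by $h^\alpha$ times the energy norm via the approximation estimate \eqref{eq:ConcreteError}, exactly as in the discretization analysis of Section~\ref{subsec:Error}. This is precisely why the estimate is asserted only ``for sufficiently small $h$'': the absorbed perturbation carries a positive power of $h$, and one chooses $h$ small enough that it is dominated by the Darcy inf-sup constant.

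\medskip\noindent
Finally, the upper bound (the $\lesssim$ direction of the $\approx$) is routine: the boundedness \eqref{eq:BBdd} of $\cB$ together with the elementary bound $|c_h(q,r)|\lesssim\|q\|_\PH\|r\|_\PH$ immediately gives $\cB_h\big(\vq,\wr\big)\lesssim(\|\bv\|_\PLT+\|q\|_\PH)(\|\bw\|_\PLT+\|r\|_\PH)$, which yields the supremum bound. So the whole content lies in the lower bound, and the whole difficulty lies in converting the same-order perturbation into a $h^\alpha$-small one through duality and elliptic regularity.
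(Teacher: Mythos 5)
Your overall strategy---view $\cB_h$ as $\cB$ minus the perturbation $c_h(q,r)$, prove only the lower bound, and use a Schatz/Aubin--Nitsche duality argument with elliptic regularity to make the perturbation absorbable for small $h$---is the right spirit, and your treatment of the upper bound is fine. But the central step as you describe it has a genuine gap. You propose to take the near-maximizing test pair $\wr$ from the Darcy stability estimate \eqref{eq:StabilityEstimate} and absorb $c_h(q,r)$. Your own bound is $|c_h(q,r)|\lesssim\|q\|_\PH\,\|r\|_\LT$, so the smallness would have to come from $\|r\|_\LT$; but for the Darcy near-maximizer one only has $\|r\|_\LT\lesssim\|r\|_\PH\approx\|\bv\|_\PLT+\|q\|_\PH$, which is not small. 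The duality gain you invoke applies to $\|q\|_\LT$, which does not appear in this bound at all (the term $\int_\O\bbeta\cdot\nabla_h q\,r\,dx$ genuinely costs the full $\PH$-norm of $q$). Moreover, the duality claim as you state it---that the $L_2$ norm of the scalar component is controlled by $h^\alpha$ times the energy norm---is false for generic discrete functions; the correct Schatz-type estimate is $\|q\|_\LT\leq C h^\alpha\big(\|\bv\|_\LT+\|q\|_\PH\big)+C S_h$, where $S_h$ denotes the dual norm being estimated, and the extra $S_h$ term is essential.

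The paper circumvents both problems by not relying on a single test pair. It first tests with $(\bv,-q)$, which cancels the $b$-terms and turns the perturbation into $c_h(q,q)\lesssim\|q\|_\PH\,\|q\|_\LT$---now the second factor is exactly the quantity that the duality argument (solving the adjoint problem \eqref{eq:Adjoint1}--\eqref{eq:Adjoint2} with datum $q$, using elliptic regularity and the interpolation estimates) controls by $Ch^\alpha(\|\bv\|_\LT+\|q\|_\PH)+CS_h$. The norm $\|q\|_\PH$ is then controlled separately by the pure inf-sup condition \eqref{eq:InfSupCondition} for $b$, using test pairs of the form $(\bw,0)$ on which $c_h$ does not act, which gives $\|q\|_\PH\lesssim S_h+\|\bv\|_\LT$. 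Combining these three inequalities and applying the inequality of arithmetic and geometric means absorbs all the $h^\alpha$-terms once $h$ is sufficiently small. If you restructure your argument around these specific test-function choices, your plan goes through.
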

\begin{proof}  Let $S_h$ be the supremum on the right-hand side of \eqref{eq:GeneralStabilityEst1}.
  It suffices to show that
\begin{equation}\label{eq:GStability0}
  \|\bv\|_\LT+\|q\|_\PH\lesssim S_h \qquad\forall\,\vq\in \ESh,
\end{equation}
 since the opposite estimate follows from the results in Section~\ref{subsec:Stability} and
 the Poincar\'e-Friedrichs inequality \cite{Brenner:2003:PF}
\begin{equation}\label{eq:PF}
  \|q\|_\LT\lesssim \|q\|_\PH \qquad\forall\,q\in Q_h.
\end{equation}
\par
  For any $\vq\in \ESh$ we have, in view of \eqref{eq:chDef}, \eqref{eq:cBhDef} and \eqref{eq:PF},
  an obvious estimate
\begin{align}\label{eq:GStability1}
  a(\bv,\bv)&\leq \cB_h\big(\vq,(\bv,-q)\big)+C_1\|q\|_\PH\|q\|_\LT\\
    &\leq S_h\big(\|\bv\|_\LT+\|q\|_\PH\big)+C_1\|q\|_\PH\|q\|_\LT.\notag
\end{align}
\par
 Let $(\bzeta,\theta)\in \ESA$ satisfy
\begin{alignat*}{3}
    a(\bzeta,\bw)+b'(\bw,\theta)&=0&\qquad&\forall\,\bw\in [\LT]^d,\\
    b'(\bzeta,r)-c(r,\theta)&=\int_\O qr\,dx&\qquad&\forall\,r\in H^1_0(\O).
\end{alignat*}
 Then we have, by Remark~\ref{rem:SimilarProperties},
\begin{equation}\label{eq:GStability2}
  \cB_h\big(\wr,(\bzeta,\theta)\big)=\int_\O qr\,dx\qquad\forall\,\wr\in\ESA.
\end{equation}
 It follows from the elliptic regularity estimate \eqref{eq:EllipticRegularityEst} and the
 interpolation error estimates \eqref{eq:RTInterpolationEst1}, \eqref{eq:RTInterpolationEst2}
 and \eqref{eq:DGInterpolationEst}
 that
\begin{equation}\label{eq:GStability3}
  \|\bzeta-\Pi_h\bzeta\|_\PLT+\|\theta-\cI_h\theta\|_\PH\lesssim
     h^\alpha\|q\|_\LT,
\end{equation}
 which implies
\begin{equation}\label{eq:GStability4}
  \|\Pi_h\bzeta\|_\LT+\|\cI_h\theta\|_\PH\lesssim \|q\|_\LT.
\end{equation}
\par
 We have, by \eqref{eq:NormEquivalence}, \eqref{eq:BBdd}, \eqref{eq:GStability2}--\eqref{eq:GStability4},
\begin{align*}
  \|q\|_\LT^2&=\cB_h\big(\vq,(\bzeta,\theta)\big)\\
          &=\cB_h\big(\vq,((\bzeta-\Pi_h\bzeta),(\theta-\cI_h\theta))\big)+
              \cB_h\big(\vq,(\Pi_h\bzeta,\cI_h\theta)\big)\\
          &\leq C_2\big(\|\bv\|_\LT+\|q\|_\PH\big) h^\alpha\|q\|_\LT+
                C_3S_h\|q\|_\LT,
\end{align*}
 and hence
\begin{equation}\label{eq:GStability5}
  \|q\|_\LT\leq C_2h^\alpha \big(\|\bv\|_\LT+\|q\|_\PH\big)
       +C_3 S_h.
\end{equation}
 Combining \eqref{eq:GStability1} and \eqref{eq:GStability5}, we find
\begin{equation}\label{eq:GStability6}
  \|\bv\|_\LT^2\leq C_4 S_h\big(\|\bv\|_\LT+\|q\|_\PH\big)+
    C_5h^\alpha\|q\|_\PH\big(\|\bv\|_\LT+\|q\|_\PH\big).
\end{equation}
\par
 We also have, by \eqref{eq:NormEquivalence}, \eqref{eq:InfSupCondition} and
 \eqref{eq:cBhDef},
\begin{equation*}
  \|q\|_\PH\lesssim \sup_{\bw\in V_h}\frac{b(\bw,q)}{\|\bw\|_\LT}
  \lesssim\sup_{\bw\in V_h}\frac{\cB_h(\vq,(\bw,0)\big)}{\|\bw\|_\LT}+\|\bv\|_\LT\lesssim
   S_h+\|\bv\|_\LT,
\end{equation*}
 and hence
\begin{equation}\label{eq:GStability7}
  \|q\|_\PH^2\leq C_6\big(S_h^2+\|\bv\|_\LT^2\big).
\end{equation}
\par
 Putting \eqref{eq:GStability6} and \eqref{eq:GStability7} together, we arrive at
\begin{align}\label{eq:GStability8}
  \|\bv\|_\LT^2+\|q\|_\PH^2
  &\leq C_6S_h^2+(1+C_6)C_4 S_h\big(\|\bv\|_\LT+\|q\|_\PH\big)\\
    &\hspace{20pt}+ (1+C_6)C_5h^\alpha\|q\|_\PH\big(\|\bv\|_\LT+\|q\|_\PH\big).\notag
\end{align}
\par
 The estimate \eqref{eq:GStability0} follows from
 \eqref{eq:GStability8} and the inequality of arithmetic and geometric means
 provided $h$ is sufficiently small.
\end{proof}
\begin{remark}\label{rem:Schatz}
  The arguments in the proof of Lemma~\ref{lem:GeneralStabilityEst1} are motivated by
  the arguments of Schatz in \cite{Schatz:1974:Indefinite} for nonsymmetric and indefinite problems.
\end{remark}
\par
 Similar arguments yield the following stability result.
\begin{lemma}\label{lem:GeneralStabilityEst2}
 The stability estimate
\begin{align}\label{eq:GeneralStabilityEst2}
 &\|\bv\|_\PLT+\|q\|_\PH \\
    &\hspace{50pt}\approx\sup_{\wr\in\ESh}\frac{\cB_h(\wr,\vq\big)}{\|\bw\|_\PLT+\|r\|_\PH}
     \qquad \forall\,\vq\in\ESh\notag
\end{align}
 holds for sufficiently small $h$.
\end{lemma}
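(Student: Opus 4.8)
The plan is to mirror the Schatz-type argument from the proof of Lemma~\ref{lem:GeneralStabilityEst1}, with the two arguments of $\cB_h(\cdot,\cdot)$ interchanged throughout. Writing $S_h$ for the supremum on the right-hand side of \eqref{eq:GeneralStabilityEst2}, it again suffices to prove $\|\bv\|_\LT+\|q\|_\PH\lesssim S_h$ for all $\vq\in\ESh$, since the reverse inequality follows from the results of Section~\ref{subsec:Stability} and the Poincar\'e--Friedrichs inequality \eqref{eq:PF}, exactly as before (the forms $\cB_h(\vq,\wr)$ and $\cB_h(\wr,\vq)$ differ only in the nonsymmetric term $c_h$, which is controlled by \eqref{eq:PF}).

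I would assemble three ingredients, each produced by inserting a suitable test function into the \emph{first} slot of $\cB_h$. Testing with $(\bv,-q)$ gives $\cB_h((\bv,-q),\vq)=a(\bv,\bv)+c_h(q,q)$ by \eqref{eq:cBhDef}, so that $a(\bv,\bv)\le S_h(\|\bv\|_\LT+\|q\|_\PH)+|c_h(q,q)|$; estimating the reaction and convection parts of $c_h(q,q)$ through \eqref{eq:chDef} and \eqref{eq:PF} reproduces the coercivity inequality of the type \eqref{eq:GStability1}. Testing with $(\bw,0)$ gives $b(\bw,q)=\cB_h((\bw,0),\vq)-a(\bw,\bv)$, so the inf-sup condition \eqref{eq:InfSupCondition} yields $\|q\|_\PH\lesssim S_h+\|\bv\|_\LT$, the analogue of \eqref{eq:GStability7}.

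The crux is the duality estimate for $\|q\|_\LT$, and this is the single place where the present proof diverges from that of Lemma~\ref{lem:GeneralStabilityEst1}. Because the supremum defining $S_h$ now runs over the first slot, the dual solution must occupy that slot; consequently I would let $(\bzeta,\theta)\in\ESA$ solve the \emph{forward} problem \eqref{eq:General1}--\eqref{eq:General2} (rather than its adjoint) with data $\int_\O q\,r\,dx$. By Remark~\ref{rem:SimilarProperties} this problem is well-posed, satisfies the elliptic regularity estimate \eqref{eq:EllipticRegularityEst}, and the identity $\cB_h((\bzeta,\theta),\wr)=\int_\O q\,r\,dx$ continues to hold for every $\wr\in\ESh$; choosing $\wr=\vq$ then gives $\|q\|_\LT^2=\cB_h((\bzeta,\theta),\vq)$. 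Splitting $(\bzeta,\theta)$ into its interpolation error $(\bzeta-\Pi_h\bzeta,\theta-\cI_h\theta)$ plus $(\Pi_h\bzeta,\cI_h\theta)\in\ESh$, the boundedness \eqref{eq:BBdd} and the interpolation estimates \eqref{eq:RTInterpolationEst1}, \eqref{eq:RTInterpolationEst2}, \eqref{eq:DGInterpolationEst} bound the first piece by a multiple of $h^\alpha\|q\|_\LT(\|\bv\|_\LT+\|q\|_\PH)$, while the second piece is controlled by $S_h\|q\|_\LT$.

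Combining the three estimates and absorbing the $h^\alpha$-weighted cross terms via the arithmetic--geometric mean inequality---legitimate once $h$ is small enough---gives $\|\bv\|_\LT+\|q\|_\PH\lesssim S_h$, precisely as in the passage from \eqref{eq:GStability8} to \eqref{eq:GStability0}. I expect the only genuine subtlety to be the bookkeeping that keeps the nonsymmetric form $c_h$ and the auxiliary solution in their correct slots; once one verifies that testing in the first slot forces the forward (not adjoint) auxiliary problem, the remaining computations are verbatim those of Lemma~\ref{lem:GeneralStabilityEst1}.
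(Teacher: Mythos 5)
Your proposal is correct and is exactly the ``similar arguments'' that the paper alludes to without writing out: one repeats the Schatz-type proof of Lemma~\ref{lem:GeneralStabilityEst1} with the two slots of $\cB_h(\cdot,\cdot)$ interchanged. You have also correctly isolated the one genuine modification, namely that the auxiliary problem in the duality step for $\|q\|_\LT$ must now be the forward problem \eqref{eq:General1}--\eqref{eq:General2} rather than the adjoint, so that its solution occupies the first slot of $\cB_h$.
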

\par
 From now on we assume that \eqref{eq:GeneralStabilityEst1} and \eqref{eq:GeneralStabilityEst2}
 are valid for all the finite element spaces involved.
 It follows from these estimates and the  same arguments in Section~\ref{subsec:Error} that
 \eqref{eq:ConcreteError} and \eqref{eq:SmoothError} also hold for the solution
 $(\bu,p)$ of \eqref{eq:General1}--\eqref{eq:General2}
 (resp. \eqref{eq:Adjoint1}--\eqref{eq:Adjoint2}) and the solution
 $(\bu_h,p_h)$ of \eqref{eq:GRT1}--\eqref{eq:GRT2}
 (resp. \eqref{eq:AdjointRT1}--\eqref{eq:AdjointRT2}).
\subsection{Multigrid Algorithms}\label{subsec:GeneralMG}
 The set-up for the multigrid algorithms remains the same, but the definition of the operator
 $\Bk:\FE{k}\longrightarrow \FE{k}$ is modified as follows:
\begin{equation}\label{eq:GeneralBkDef}
  \big[\Bk\vq,\wr\big]_k=\cB_k\big(\vq,\wr\big)\qquad\forall\,\vq,\wr\in\FE{k},
\end{equation}
 where $\cB_k$ is the bilinear form on $\FE{k}$ defined by \eqref{eq:cBhDef}.
 The transpose $\Bk^t$ of $\Bk$ with respect to the mesh-dependent inner product
 $[\cdot,\cdot]_k$ satisfies
\begin{equation}\label{eq:BkTranspose}
  \big[\Bk^t\vq,\wr\big]_k=\cB_k\big(\wr,\vq\big)\qquad\forall\,\vq,\wr\in\FE{k}.
\end{equation}
\par
 We have the following analog of Lemma~\ref{lem:BkSkBk}, with an identical proof
 that uses \eqref{eq:GeneralStabilityEst1} instead of \eqref{eq:StabilityEstimate}.
\begin{lemma}\label{lem:BktSkBk}
  The norm equivalence
\begin{equation}\label{eq:BktSkBk}
  \big[\Bk^t\Sk\Bk\vq,\vq\big]_k^\frac12\approx \|\bv\|_\LT+\|q\|_\PHk \qquad\forall\,\vq\in\FE{k}
\end{equation}
 holds for $k=0,1,2,\ldots$.
\end{lemma}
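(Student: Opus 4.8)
The plan is to follow the proof of Lemma~\ref{lem:BkSkBk} essentially verbatim, the only structural change being that the self-adjoint factor $\Bk$ standing on the left of $\Bk\Sk\Bk$ is now replaced by its transpose $\Bk^t$. This replacement is precisely what keeps the operator $\Bk^t\Sk\Bk$ symmetric positive definite with respect to $[\cdot,\cdot]_k$ once the bilinear form $\cB_k$ is no longer symmetric (because of the lower-order terms in $c_h$). First I would fix $\vq\in\FE{k}$, set $(\bm{x},y)=\Sk\Bk\vq$, and use the defining relation \eqref{eq:BkTranspose} of $\Bk^t$ together with the symmetry of $[\cdot,\cdot]_k$ and the definition \eqref{eq:GeneralBkDef} to rewrite
\[
  \big[\Bk^t\Sk\Bk\vq,\vq\big]_k=\big[\Sk\Bk\vq,\Bk\vq\big]_k=\big[\Sk^{-1}(\bm{x},y),(\bm{x},y)\big]_k,
\]
where the last equality uses $\Sk^{-1}(\bm{x},y)=\Bk\vq$. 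This is the identical quantity appearing in the proof of Lemma~\ref{lem:BkSkBk}, so from here the remaining steps carry over unchanged.

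Next I would represent the $\Sk^{-1}$-norm by duality: since $\Sk$ is SPD with respect to $[\cdot,\cdot]_k$, Cauchy--Schwarz in the $\Sk^{-1}$-inner product is sharp and gives
\[
  \big[\Sk^{-1}(\bm{x},y),(\bm{x},y)\big]_k^{1/2}
    =\sup_{\wr\in\FE{k}}\frac{\big[\Sk^{-1}(\bm{x},y),\wr\big]_k}{\big[\Sk^{-1}\wr,\wr\big]_k^{1/2}}.
\]
Using $\Sk^{-1}(\bm{x},y)=\Bk\vq$ together with \eqref{eq:GeneralBkDef}, the numerator becomes $\cB_k(\vq,\wr)$; by \eqref{eq:SkInverse} and \eqref{eq:NormEquivalence} the denominator is equivalent to $\|\bw\|_\LT+\|r\|_\PHk$. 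Hence the whole expression is equivalent to $\sup_{\wr\in\FE{k}}\cB_k(\vq,\wr)/(\|\bw\|_\LT+\|r\|_\PHk)$.

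Finally, I would invoke the general stability estimate \eqref{eq:GeneralStabilityEst1} (assumed valid for all the finite element spaces involved), which identifies this supremum with $\|\bv\|_\LT+\|q\|_\PHk$ up to equivalence, yielding \eqref{eq:BktSkBk}. I do not expect a genuine obstacle here: the entire argument is driven by \eqref{eq:GeneralStabilityEst1}, whose proof (Lemma~\ref{lem:GeneralStabilityEst1}, via the Schatz-type duality argument) has already absorbed the difficulty posed by the nonsymmetric and possibly indefinite lower-order terms. The only point requiring care is the first step: one must apply the transpose relation \eqref{eq:BkTranspose} in the correct slot so as to recover exactly the symmetric quadratic form $\big[\Sk^{-1}(\bm{x},y),(\bm{x},y)\big]_k$, after which the chain of equivalences is word-for-word that of Lemma~\ref{lem:BkSkBk}.
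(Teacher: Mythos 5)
Your proposal is correct and follows exactly the route the paper intends: the authors state that Lemma~\ref{lem:BktSkBk} has a proof identical to that of Lemma~\ref{lem:BkSkBk} with \eqref{eq:GeneralStabilityEst1} replacing \eqref{eq:StabilityEstimate}, and your use of the transpose relation \eqref{eq:BkTranspose} to reduce $\big[\Bk^t\Sk\Bk\vq,\vq\big]_k$ to $\big[\Sk^{-1}(\bm{x},y),(\bm{x},y)\big]_k$ is precisely the right adaptation for the nonsymmetric bilinear form. The subsequent duality representation, the use of \eqref{eq:SkInverse}, and the appeal to the general stability estimate all match the paper's argument.
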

\par
 Similar arguments using \eqref{eq:GeneralStabilityEst2} and \eqref{eq:BkTranspose}
 yield another analog of Lemma~\ref{lem:BkSkBk}.
\begin{lemma}\label{lem:BkSkBkt}
  The norm equivalence
\begin{equation}\label{eq:BkSkBkt}
  \big[\Bk\Sk\Bk^t\vq,\vq\big]_k^\frac12\approx \|\bv\|_\LT+\|q\|_\PHk \qquad\forall\,\vq\in\FE{k}
\end{equation}
 holds for $k=0,1,2,\ldots$.
\end{lemma}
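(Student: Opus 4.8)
The plan is to mirror the proof of Lemma~\ref{lem:BkSkBk} essentially verbatim, with the single modification that $\Bk^t$ now sits to the right of $\Sk$ in place of $\Bk$, and that the nonsymmetric stability estimate \eqref{eq:GeneralStabilityEst2} replaces \eqref{eq:StabilityEstimate}. Given an arbitrary $\vq\in\FE{k}$, I would introduce $(\bm{x},y)=\Sk\Bk^t\vq$ and use the self-adjointness of $\Sk$ with respect to $[\cdot,\cdot]_k$ together with the definition of $\Bk^t$ to rewrite the quantity of interest as
\[
  [\Bk\Sk\Bk^t\vq,\vq]_k=[\Sk\Bk^t\vq,\Bk^t\vq]_k=[\Sk^{-1}(\bm{x},y),(\bm{x},y)]_k .
\]

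Next I would express the square root of this $\Sk^{-1}$-energy as a supremum, exactly as in the proof of Lemma~\ref{lem:BkSkBk},
\[
  [\Sk^{-1}(\bm{x},y),(\bm{x},y)]_k^{1/2}
  =\sup_{\wr\in\FE{k}}\frac{[\Sk^{-1}(\bm{x},y),\wr]_k}{[\Sk^{-1}\wr,\wr]_k^{1/2}},
\]
and then evaluate numerator and denominator separately. The denominator is $\approx\|\bw\|_\LT+\|r\|_\PHk$ by \eqref{eq:SkInverse}. In the numerator I would cancel $\Sk^{-1}\Sk$ to get $[\Sk^{-1}(\bm{x},y),\wr]_k=[\Bk^t\vq,\wr]_k$ and then invoke \eqref{eq:BkTranspose} to rewrite this as $\cB_k(\wr,\vq)$. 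Combining these and using \eqref{eq:NormEquivalence} to pass from $\|\bw\|_\PLT$ to $\|\bw\|_\LT$ leaves
\[
  [\Bk\Sk\Bk^t\vq,\vq]_k^{1/2}
  \approx\sup_{\wr\in\FE{k}}\frac{\cB_k(\wr,\vq)}{\|\bw\|_\LT+\|r\|_\PHk},
\]
whose right-hand side is $\approx\|\bv\|_\LT+\|q\|_\PHk$ by the stability estimate \eqref{eq:GeneralStabilityEst2}.

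The only point demanding genuine care — and the reason the argument is not literally the one for Lemma~\ref{lem:BkSkBk} — is the bookkeeping of the transpose. Because the term $c_h(\cdot,\cdot)$ renders $\cB_k$ non-symmetric, the numerator must emerge as $\cB_k(\wr,\vq)$ with $\vq$ in the \emph{second} slot; this is precisely the form controlled by the ``transposed'' inf-sup estimate \eqref{eq:GeneralStabilityEst2}, whereas the companion Lemma~\ref{lem:BktSkBk} produces $\cB_k(\vq,\wr)$ and invokes \eqref{eq:GeneralStabilityEst1}. Matching the correct stability estimate to the correct argument of $\cB_k$ is the whole substance of the proof; once the transpose is tracked correctly through \eqref{eq:BkTranspose}, no new estimates are needed and the remainder is routine.
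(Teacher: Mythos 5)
Your proposal is correct and follows exactly the route the paper intends: the paper proves Lemma~\ref{lem:BkSkBkt} by declaring it an analog of Lemma~\ref{lem:BkSkBk} obtained by ``similar arguments using \eqref{eq:GeneralStabilityEst2} and \eqref{eq:BkTranspose},'' which is precisely the duality computation you carry out, with the transpose tracked so that $\vq$ lands in the second slot of $\cB_k$ and the transposed stability estimate applies. No gaps.
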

\par
 In the definitions of the multigrid algorithms for the problem
\begin{equation}\label{eq:GeneralProblem1}
  \Bk\vq=(\bg,z)
\end{equation}
 arising from\eqref{eq:GRT1}--\eqref{eq:GRT2},
 the pre-smoothing step in \eqref{eq:PreSmoothing} becomes
\begin{equation}\label{eq:PreSmoothing1}
  (\bv_{j},q_{j})=(\bv_{j-1},q_{j-1})+\delta_k \Sk\Bk^t\big(\gz-\Bk(\bv_{j-1},q_{j-1})\big),
\end{equation}
 and the post-smoothing step in \eqref{eq:PostSmoothing} becomes
\begin{equation}\label{eq:PostSmoothing1}
  (\bv_{j},q_{j})=(\bv_{j-1},q_{j-1})+\delta_k \Bk^t\Sk\big(\gz-\Bk(\bv_{j-1},q_{j-1})\big)
\end{equation}
\par
 Similarly, in the definitions of the multigrid algorithms for the problem
\begin{equation}\label{eq:GeneralProblem2}
  \Bk^t\vq=(\bg,z)
\end{equation}
 arising from \eqref{eq:AdjointRT1}--\eqref{eq:AdjointRT2},
 the pre-smoothing step in \eqref{eq:PreSmoothing} becomes
\begin{equation}\label{eq:PreSmoothing2}
  (\bv_{j},q_{j})=(\bv_{j-1},q_{j-1})+\delta_k \Sk\Bk\big(\gz-\Bk^t(\bv_{j-1},q_{j-1})\big),
\end{equation}
 and the post-smoothing step in \eqref{eq:PostSmoothing} becomes
\begin{equation}\label{eq:PostSmoothing2}
  (\bv_{j},q_{j})=(\bv_{j-1},q_{j-1})+\delta_k \Bk\Sk\big(\gz-\Bk^t(\bv_{j-1},q_{j-1})\big).
\end{equation}
\par
 In view of \eqref{eq:BktSkBk} and \eqref{eq:BkSkBkt}, we can choose $\delta_k=Ch_k^2$ so that
\begin{equation}\label{eq:SpectralConditions}
  \delta_k\cdot\rho(\Bk^t\Sk\Bk)\leq 1 \quad \text{and} \quad
  \delta_k\cdot\rho(\Bk\Sk\Bk^t)\leq 1.
\end{equation}
\subsection{Convergence Analysis}\label{subsec:GeneralAnalysis}
 Since the approach is similar
 we will only point out the necessary modifications and refer to
 Section~\ref{sec:MeshDependentNorms} and Section~\ref{sec:Analysis} for details.
\par
 There are now four error propagation operators for the smoothing steps.
 The error propagation operator for one post-smoothing step
 is given by
\begin{equation}\label{eq:Rk1Def}
  R_k=Id_k-\delta_k \Bk^t\Sk\Bk,
\end{equation}
 in the case of \eqref{eq:PostSmoothing1}, and
\begin{equation}\label{eq:Rk2Def}
  \tR_k=Id_k-\delta_k \Bk\Sk\Bk^t,
\end{equation}
 in the case of \eqref{eq:PostSmoothing2}.
\par
 The error propagation operator for one pre-smoothing step  is given by
\begin{equation}\label{eq:Sk1Def}
  S_k=Id_k-\delta_k \Sk\Bk^t\Bk,
\end{equation}
 in the case of \eqref{eq:PreSmoothing1}, and
\begin{equation}\label{eq:Sk2Def}
  \tS_k=Id_k-\delta_k \Sk\Bk\Bk^t,
\end{equation}
 in the case of \eqref{eq:PreSmoothing2}.
\par
 These operators satisfy the following relations:
\begin{alignat}{3}
  \cB_k(R_k\vq,\wr\big)&=\cB_k\big(\vq,\tS_k\wr\big)&\qquad&\forall\,\vq,\wr\in\FE{k},
    \label{eq:AdjointRelation1}\\
  \cB_k\big(S_k\vq,\wr\big)&=\cB_k\big(\vq,\tR_k\wr\big)&\qquad&\forall\,\vq,\wr\in\FE{k}.
    \label{eq:AdjointRelation2}
\end{alignat}
\par
 For $0\leq s\leq 1$, there are two scales of mesh-dependent norms.
 The norm $\|\cdot\|_{s,k}$ is defined by
\begin{equation}\label{eq:FirstScale1}
  \|\vq\|_{s,k}=\big[(\Bk^t\Sk\Bk)^s\vq,\vq\big]_k^\frac12 \qquad\forall\,\vq\in \FE{k},
\end{equation}
 and the norm $\|\cdot\|_{s,k}^\sim$ is defined by
\begin{equation}\label{eq:FirstScale2}
  \|\vq\|_{s,k}^\sim=\big[(\Bk\Sk\Bk^t)^s\vq,\vq\big]_k^\frac12 \qquad\forall\,\vq\in \FE{k}.
\end{equation}
\par
 In view of \eqref{eq:VectorIP}--\eqref{eq:MDIP} and \eqref{eq:BktSkBk}--\eqref{eq:BkSkBkt},
 the norm equivalences
 \eqref{eq:ZeroEquivalence}--\eqref{eq:OneEquivalence} also hold for the norms
 $\|\cdot\|_{s,k}$ and $\|\cdot\|_{s,k}^\sim$ defined by
  \eqref{eq:FirstScale1}--\eqref{eq:FirstScale2}.  Consequently all the results
  in Section~\ref{subsec:MDNorms}--Section~\ref{subsec:DkChracterization}
   remain valid for these mesh-dependent norms, and in particular,
\begin{equation}\label{eq:FirstScalesEquivalent}
  \|\vq\|_{s,k}\approx \|\vq\|_{s,k}^\sim \qquad\forall\,\vq\in \FE{k}, \;k\geq1.
\end{equation}
  Moreover if we define, for
   $1\leq s\leq 2$, the norms
  $\tbar\cdot\tbar_{s,k}$ and $\tbar\cdot\tbar_{s,k}^\sim$ by
\begin{alignat}{3}
 \tbar \vq\tbar_{s,k}=\sup_{\wr\in\FE{k}}
  \frac{\cB_k\big(\vq,\wr\big)}{\|\wr\|_{2-s,k}} \qquad\forall\,\vq\in \FE{k},\label{eq:SecondMDNorms1}\\
 \tbar \vq\tbar_{s,k}^\sim=\sup_{\wr\in\FE{k}}
  \frac{\cB_k\big(\wr,\vq\big)}{\|\wr\|_{2-s,k}^\sim} \qquad\forall\,\vq\in \FE{k},\label{eq:SecondMDNorms2}
\end{alignat}
 then the results in Section~\ref{subsec:Another} also hold for these mesh-dependent norms.
\par
 There are now two Ritz projection operators.
 The operators $P_k^{k-1}:\FE{k}\longrightarrow \FE{k-1}$  and
 $\tP_k^{k-1}:\FE{k}\longrightarrow\FE{k-1}$ are defined by
\begin{alignat}{3}
  \cB_k\big(P_k^{k-1}\vq,\wr\big)=\cB_k(\vq,I_{k-1}^k\wr\big)
  &\qquad&\forall\,\vq,\wr\in\FE{k}, \label{eq:RitzProjection1}\\
  \cB_k\big(\vq,\tP_k^{k-1}\wr\big)=\cB_k\big(I_{k-1}^k\vq,\wr\big)&\qquad&\forall\,\vq,\wr\in\FE{k}.
  \label{eq:RitzProjection2}
\end{alignat}
 Property \eqref{eq:Projection} remains valid, and it also holds if $P_k^{k-1}$ is replaced by
 $\tP_k^{k-1}$.  Consequently we have the following analogs of the Galerkin orthogonality
 \eqref{eq:GalerkinOrthogonality}
\begin{align*}
  0&=\cB_k\big((Id_k-I_{k-1}^kP_k^{k-1})\vq,I_{k-1}^k\wr\big),\\
   0&=\cB_k\big(I_{k-1}^k \vq,(Id_k-I_{k-1}^k\tP_k^{k-1})\wr\big),
\end{align*}
 for all $\vq\in\FE{k}$ and $\wr\in\FE{k-1}$.
\par
 Note also that \eqref{eq:RitzProjection1} and \eqref{eq:RitzProjection2} imply
\begin{equation}\label{eq:RitzProjectionAdjointRelation}
  \cB_k\big(I_{k-1}^kP_k^{k-1}\vq,\wr\big)=\cB_k(\vq,I_{k-1}^k\tP_k^{k-1}\wr\big)
  \quad\forall\,\vq,\wr\in \FE{k}.
\end{equation}
\par
 The error propagation operators for the multigrid algorithms are given by \eqref{eq:MGRecursion}
 for the problem \eqref{eq:GeneralProblem1}, and
\begin{equation}\label{eq:MGRecursion2}
  \tE_k=\tR_k^{m_2}(Id_k-I_{k-1}^k\tP_k^{k-1}+I_{k-1}^k\tE_{k-1}^p\tP_k^{k-1})\tS_k^{m_1}
\end{equation}
 for the problem \eqref{eq:GeneralProblem2}.
\par
 Since the proofs of Lemma~\ref{lem:SmoothingProperty} and Lemma~\ref{lem:ApproximationProperty}
 only involve the results in Section~\ref{sec:MeshDependentNorms} and duality arguments
 based on elliptic regularity and Galerkin orthogonality,
 they remain valid for the norms $\|\cdot\|_{s,k}$ and $\|\cdot\|_{s,k}^\sim$ defined in
 \eqref{eq:FirstScale1} and \eqref{eq:FirstScale2}.  Therefore we have
 the estimates on the effect of post-smoothing coupled with coarse grid correction:
\begin{alignat}{3}
 \|R_k^{m}(\CGC)\vq\|_{1,k}&\lesssim
     m^{-\alpha/2}\|\vq\|_{1,k}&\qquad&\forall\,\vq\in\FE{k}.\label{eq:PostCoarse1}\\
 \|\tR_k^{m}(\CGCT)\vq\|_{1,k}^\sim&\lesssim
     m^{-\alpha/2}\|\vq\|_{1,k}^\sim&\qquad&\forall\,\vq\in\FE{k}.\label{eq:PostCoarse2}
\end{alignat}
\par
 It follows from \eqref{eq:GCSInequality},
 \eqref{eq:AdjointRelation2},
 \eqref{eq:FirstScalesEquivalent}, \eqref{eq:RitzProjectionAdjointRelation}
 and  \eqref{eq:PostCoarse2} that we have an estimate which
 measures the effect of pre-smoothing coupled with coarse grid correction for
 the problem \eqref{eq:GeneralProblem1}:
\begin{align}\label{eq:PreOnlyEstimate}
  &\tbar (\CGC)S_k^m\vq\tbar_{1,k}\notag\\
   &\hspace{30pt}=\sup_{\wr \in\FE{k}}
    \frac{\cB_k\big((\CGC)S_k^m\vq ,\wr \big)}{\|\wr \|_{1,k}}\notag\\
   &\hspace{30pt}=\sup_{\wr \in\FE{k}}
    \frac{\cB_k\big(\vq ,\tR_k^m(\CGCT)\wr \big)}{\|\wr \|_{1,k}}\\
    &\hspace{30pt}\leq\sup_{\wr \in\FE{k}}
    \frac{\tbar\vq\tbar_{1,k}\|\tR_k^m(\CGCT)\wr\|_{1,k}}{\|\wr \|_{1,k}}\notag\\
    &\hspace{30pt}\approx \sup_{\wr \in\FE{k}}
    \frac{\tbar\vq\tbar_{1,k}\|\tR_k^m(\CGCT)\wr\|_{1,k}^\sim}{\|\wr \|_{1,k}^\sim}\notag\\
    &\hspace{30pt}\lesssim m^{-\alpha/2}\tbar\vq\tbar_{1,k}\qquad\forall\,\vq\in\FE{k}.\notag
\end{align}
 Similarly the estimate
\begin{equation}\label{eq:CoarsePre2}
  \tbar (\CGCT)\tS_k^m\vq\tbar_{1,k}^\sim\lesssim m^{-\alpha/2}\tbar\vq\tbar_{1,k}^\sim
  \qquad\forall\,\vq\in\FE{k}
\end{equation}
 that measures the effect of pre-smoothing coupled with coarse grid correction for the
 problem \eqref{eq:GeneralProblem2}
 follows from \eqref{eq:GCSInequality},
 \eqref{eq:AdjointRelation1},
 \eqref{eq:FirstScalesEquivalent}, \eqref{eq:RitzProjectionAdjointRelation}
 and \eqref{eq:PostCoarse1}.
\par
 Consequently the estimate \eqref{eq:PostCoarsePre} for the two grid algorithm holds for the
 problem \eqref{eq:GeneralProblem1}, and its counterpart
\begin{align}\label{eq:TwoGridEst2}
  &\|\tR_k^{m_2}(\CGCT)\tS_k^{m_1}\vq\|_{1,k}^\sim\\
  &\hspace{40pt}\leq C_* [\max(1,m_1)\max(m_2,1)]^{-\alpha/2}
    \|\vq\|_{1,k}^\sim \qquad\forall\,\vq\in\FE{k},\;k\geq1\notag
\end{align}
 holds for the problem \eqref{eq:GeneralProblem2}.
\par
 A perturbation argument leads to the following convergence result for the $W$-cycle algorithm.
\begin{theorem}\label{thm:WCycle2}
  Let $E_k$ $($resp. $\tE_k)$ be the error propagation operator for the $k$-th level $W$-cycle
  algorithm for \eqref{eq:GeneralProblem1} $($resp. \eqref{eq:GeneralProblem2}$)$.
  For any $C_\dag>C_*$ $($the constant in \eqref{eq:TwoGridEst} and \eqref{eq:TwoGridEst2}$)$,
 there exists a positive number $m_*$ $($independent of $k)$ such that
\begin{align*}
  \|E_k\vq\|_{1,k}\leq C_\dag\big(\max(1,m_1)\max(1,m_2)\big)^{-\alpha/2}
  \|\vq\|_{1,k},\\
  \|\tE_k\vq\|_{1,k}^\sim\leq C_\dag\big(\max(1,m_1)\max(1,m_2)\big)^{-\alpha/2}
  \|\vq\|_{1,k}^\sim,
\end{align*}
 for all $\vq\in\FE{k}$ and $k\geq1$,
 provided $\max(1,m_1)\max(1,m_2)\geq m_*$.
\end{theorem}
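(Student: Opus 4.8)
The plan is to establish both contraction estimates simultaneously by induction on $k$, treating the primal problem \eqref{eq:GeneralProblem1} and the adjoint problem \eqref{eq:GeneralProblem2} in parallel, along the lines of the proof of Theorem~\ref{thm:WCycle}. Write $\mu=\big(\max(1,m_1)\max(1,m_2)\big)^{-\alpha/2}$ and let $\gamma_k$ (resp.\ $\tilde\gamma_k$) be the operator norm of $E_k$ with respect to $\|\cdot\|_{1,k}$ (resp.\ of $\tE_k$ with respect to $\|\cdot\|_{1,k}^\sim$). Since a direct solve is used at level $0$, we have $E_0=\tE_0=0$, so at $k=1$ the recursions \eqref{eq:MGRecursion} and \eqref{eq:MGRecursion2} reduce to the two-grid operators; the base case $\gamma_1,\tilde\gamma_1\leq C_*\mu$ is then precisely \eqref{eq:TwoGridEst} and \eqref{eq:TwoGridEst2}.

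For the inductive step I would split the recursion \eqref{eq:MGRecursion} as
\[
  E_k=R_k^{m_2}(\CGC)S_k^{m_1}+R_k^{m_2}I_{k-1}^kE_{k-1}^2P_k^{k-1}S_k^{m_1}.
\]
The first summand is the two-grid error propagation operator, bounded in $\|\cdot\|_{1,k}$ by $C_*\mu$ via \eqref{eq:TwoGridEst}. For the second summand I would combine the uniform $\|\cdot\|_{1,k}$-stability of the factors $R_k^{m_2}$, $S_k^{m_1}$, $I_{k-1}^k$, $P_k^{k-1}$ with the induction hypothesis $\|E_{k-1}\wr\|_{1,k-1}\leq\gamma_{k-1}\|\wr\|_{1,k-1}$, which contributes the factor $\gamma_{k-1}^2$. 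This yields
\[
  \gamma_k\leq C_*\mu+C_0\,\gamma_{k-1}^2
\]
with $C_0$ independent of $k$, $m_1$ and $m_2$; the same argument applied to \eqref{eq:MGRecursion2} gives $\tilde\gamma_k\leq C_*\mu+C_0\,\tilde\gamma_{k-1}^2$.

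To close the induction, fix $C_\dag>C_*$. I would prove $\gamma_k\leq C_\dag\mu$ for all $k\geq1$: assuming $\gamma_{k-1}\leq C_\dag\mu$, the recursion gives $\gamma_k\leq\mu\,(C_*+C_0C_\dag^2\mu)$, which is at most $C_\dag\mu$ as soon as $C_0C_\dag^2\mu\leq C_\dag-C_*$. Since $\mu\downarrow0$ as $\max(1,m_1)\max(1,m_2)\to\infty$, this holds whenever $\max(1,m_1)\max(1,m_2)\geq m_*$ for a threshold $m_*$ depending only on $C_*$, $C_\dag$, $C_0$ and $\alpha$, hence independent of $k$; the identical threshold controls $\tilde\gamma_k$.

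The main obstacle is to guarantee that $C_0$ is independent of $k$ \emph{and} of $m_1,m_2$, i.e.\ the uniform energy-norm power-boundedness of the smoothers together with the stability of the Ritz projections. The post-smoothers are immediate: $R_k=Id_k-\delta_k\Bk^t\Sk\Bk$ and $\tR_k=Id_k-\delta_k\Bk\Sk\Bk^t$ are polynomials in the operators defining $\|\cdot\|_{1,k}$ and $\|\cdot\|_{1,k}^\sim$ through \eqref{eq:FirstScale1}--\eqref{eq:FirstScale2}, so by the spectral bounds \eqref{eq:SpectralConditions} they satisfy $\|R_k^{m_2}\vq\|_{1,k}\leq\|\vq\|_{1,k}$ and $\|\tR_k^{m_2}\vq\|_{1,k}^\sim\leq\|\vq\|_{1,k}^\sim$ for every $m_2$. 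The pre-smoother powers are the delicate point, since $S_k=Id_k-\delta_k\Sk\Bk^t\Bk$ is not a polynomial in $\Bk^t\Sk\Bk$; here I would use the adjoint relation \eqref{eq:AdjointRelation2} in the form $\cB_k(S_k^{m_1}\vq,\wr)=\cB_k(\vq,\tR_k^{m_1}\wr)$, pass through the dual norm \eqref{eq:SecondMDNorms1} and a generalized Cauchy--Schwarz inequality to replace $S_k^{m_1}$ by the contraction $\tR_k^{m_1}$, and then invoke $\tbar\cdot\tbar_{1,k}\approx\|\cdot\|_{1,k}$ and the equivalence \eqref{eq:FirstScalesEquivalent} to conclude $\|S_k^{m_1}\vq\|_{1,k}\lesssim\|\vq\|_{1,k}$ with a constant independent of $m_1$ and $k$; the roles of $S_k$ and $\tS_k$ are exchanged for the adjoint problem through \eqref{eq:AdjointRelation1}. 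Finally, the $\|\cdot\|_{1,k}$-stability of $P_k^{k-1}$ and $\tP_k^{k-1}$ follows, exactly as in Lemma~\ref{lem:Stability}, from the stability estimates \eqref{eq:GeneralStabilityEst1}--\eqref{eq:GeneralStabilityEst2}, the definitions \eqref{eq:RitzProjection1}--\eqref{eq:RitzProjection2} and duality. Once these uniform bounds are in place the perturbation induction above is routine and parallels the proof of Theorem~\ref{thm:WCycle}.
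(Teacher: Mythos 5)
Your proposal is correct and follows essentially the same route as the paper, which establishes the theorem by the standard perturbation/induction argument on the two-grid estimates \eqref{eq:TwoGridEst} and \eqref{eq:TwoGridEst2} (the paper simply refers to the argument of Theorem~5.5 of the Stokes--Lam\'e paper rather than writing it out). Your careful justification of the uniform, $m$-independent energy-norm bounds for $R_k^{m_2}$ and $S_k^{m_1}$ --- the former via the spectral calculus for $\Bk^t\Sk\Bk$, the latter via the adjoint relation \eqref{eq:AdjointRelation2}, the dual norm and \eqref{eq:FirstScalesEquivalent} --- together with the stability of $I_{k-1}^k$ and the Ritz projections, is exactly the content needed to make that perturbation argument close, so nothing is missing.
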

%
\section{Numerical Results}\label{sec:Numerics}
 We report in this section numerical results
 that corroborate the theoretical estimates and illustrate the performance of the multigrid methods.
 The computational domains are the unit square $(0,1)^2$ and the $L$-shaped domain
 $(-1,1)^2\setminus[0,1]\times[-1,0]$.
 We use the Raviart-Thomas-N\'ed\'elec mixed finite element method of order $1$ on uniform meshes
 in all the numerical experiments, which were supported by the HPC resources of LONI.
\subsection{Error in the Nonconforming Energy Norm}\label{subsec:DiscretizationErrors}
 In this set of numerical experiments we solve the Darcy system
\begin{equation}\label{eq:DarcySystem}
  \bm{u}  = -\nabla p  \,\, \mbox{in} \,\, \Omega, \,\,
  \nabla \cdot \bm{u}  = f  \,\, \mbox{in} \,\, \Omega, \,\, \mbox{and} \,\,
   p  = 0 \,\, \mbox{on} \,\, \partial \Omega,
\end{equation}
 and the convection-diffusion equation
\begin{equation}\label{eq:CD}
 -\nabla \cdot \left( \nabla p \right) + \bm{\beta} \cdot \nabla p = f \,\, \mbox{in} \,\,
 \Omega \,\, \mbox{and} \,\, p
 = 0 \,\, \mbox{on} \,\, \partial \Omega,
\end{equation}
 where $\bm{\beta} = \left[ 2, -1 \right]^T$.
 We check the error estimate \eqref{eq:ConcreteError} by computing
\begin{equation*}
  \|\bm{u} - \bm{u}_h\|_\PLT  \quad \text{and} \quad
 \|p-p_h\|_\PH.
\end{equation*}
\subsubsection{Unit Square}\label{subsubsec:SquareRate}
 We take the exact solution to be
 $p = \sin\left(\pi x\right) \sin\left(\pi y\right)$ and $\bm{u} = -\nabla p$.
 The results are displayed in Table~\ref{table:DarcyRateSquare} and Table~\ref{table:CDRateSquare}.
 The index of elliptic regularity $\alpha=1$ for the square and the convergence rate for
 $\|p-p_h\|_\PH$ is $1$ for both problems, which agrees with \eqref{eq:ConcreteError}.  The convergence rate
 for $\|\bu-\bu_h\|_\PLT$ is $2$ for both problems, which is higher than the predicted rate of $1$.
 This is likely due to the phenomenon of superconvergence, since the exact solution is smooth and we use
 uniform meshes.
\begin{table}[hh]
\caption{Convergence rates for \eqref{eq:DarcySystem} on the unit square}
\centering
\begin{tabular}{c|c|c|c|c}
  \hline
  &&&&\\[-12pt]
 $h$ & $\|\bm{u} - \bm{u}_h\|_\PLT$ & rate & $\|p-p_h\|_\PH$  &
  rate \\
    &&&&\\[-12pt]\hline &&&&\\[-12pt]
$1/4$ & 4.213e-2  &  & 3.462e-1 &    \\
  \hline &&&&\\[-12pt]
$1/8$ & 1.026e-2  & 2.038 & 1.722e-1 & 1.008  \\
   \hline &&&&\\[-12pt]
$1/16$ & 2.529e-3  & 2.020 & 8.594e-2 & 1.003  \\
   \hline &&&&\\[-12pt]
$1/32$ & 6.281e-4  & 2.010 & 4.297e-2 & 1.000  \\
   \hline &&&&\\[-12pt]
$1/64$ & 1.565e-4  & 2.005 & 2.149e-2 & 1.000  \\
   \hline &&&&\\[-12pt]
$1/128$ & 3.905e-5  & 2.002 & 1.074e-2 & 1.000  \\ \hline
  \end{tabular}
\label{table:DarcyRateSquare}
\end{table}
\begin{table}[hh]
\caption{Convergence rates for \eqref{eq:CD} on the unit square}
\centering
\begin{tabular}{c|c|c|c|c}
\hline   &&&&\\[-12pt]
  $h$ & $\|\bm{u} - \bm{u}_h\|_\PLT$ & rate & $\|p-p_h\|_\PH$  & rate \\
  &&&&\\[-12pt]\hline
$1/4$ & 9.188e-2  &  & 3.492e-1 &    \\
  \hline&&&&\\[-12pt]
$1/8$ & 2.345e-2  & 1.970 & 1.727e-1 & 1.016  \\
  \hline&&&&\\[-12pt]
$1/16$ & 5.904e-3  & 1.990 & 8.600e-2 & 1.006  \\
  \hline&&&&\\[-12pt]
$1/32$ & 1.480e-3  & 1.996 & 4.298e-2 & 1.001  \\
  \hline&&&&\\[-12pt]
$1/64$ & 3.705e-4  & 1.998 & 2.150e-2 & 1.000  \\
  \hline&&&&\\[-12pt]
$1/128$ & 9.268e-5  & 1.999 & 1.075e-2 & 1.000  \\ \hline
  \end{tabular}
\label{table:CDRateSquare}
\end{table}
\subsubsection{$L$-Shaped Domain}\label{subsubsec:LShaprRate}
 We take the exact solution
  to be $p = (1-x^2)(1-y^2)r^{2/3} \sin\left(\left(2/3\right) \theta\right)$ and
  $\bm{u} =-\nabla p$, where $\left( r, \theta \right)$ are the polar coordinates.
  The index of elliptic regularity $\alpha$ can be any number $<\frac23$ for the $L$-shaped domain
  and the exact solution has the correct singularity.
  The results are presented in
  Table~\ref{table:DarcyLShapeRate} and Table~\ref{table:CDLShapeRate}, which agree with
  \eqref{eq:ConcreteError}.
\begin{table}[!ht]
\caption{Convergence rates for \eqref{eq:DarcySystem} on the $L$-shaped domain}
\centering
  \begin{tabular}{c|c|c|c|c}
\hline   &&&&\\[-12pt]
  $h$ & $\|\bm{u} - \bm{u}_h\|_\PLT$ & rate & $\|p-p_h\|_\PH$  & rate \\
 &&&&\\[-12pt] \hline    &&&&\\[-12pt]
$1/4$   & 1.121e-1 &       & 1.131e-1 &    \\
 \hline&&&&\\[-12pt]
$1/8$   & 6.654e-2 & 0.752 & 5.964e-2 & 0.923  \\
 \hline &&&&\\[-12pt]
$1/16$  & 4.144e-2 & 0.683 & 3.244e-2 & 0.879  \\
 \hline &&&&\\[-12pt]
$1/32$  & 2.605e-2 & 0.670 & 1.818e-2 & 0.835  \\
  \hline&&&&\\[-12pt]
$1/64$  & 1.640e-2 & 0.667 & 1.048e-2 & 0.795  \\
  \hline&&&&\\[-12pt]
$1/128$ & 1.033e-2 & 0.667 & 6.186e-3 & 0.760  \\ \hline
  \end{tabular}
\label{table:DarcyLShapeRate}
\end{table}
\begin{table}[!ht]
\caption{Convergence rates for \eqref{eq:CD} on the $L$-shaped domain}
\centering
  \begin{tabular}{c|c|c|c|c}
 \hline  &&&&\\[-12pt]
  $h$ & $\|\bm{u} - \bm{u}_h\|_\PLT$ & rate & $\|p-p_h\|_\PH$  & rate \\
   &&&&\\[-12pt] \hline &&&&\\[-12pt]
$1/4$   & 1.306e-1  &       & 1.156e-1 &   \\
  \hline&&&&\\[-12pt]
$1/8$   & 7.025e-2  & 0.894 & 6.023e-2 & 0.941  \\
  \hline&&&&\\[-12pt]
$1/16$  & 4.231e-2  & 0.731 & 3.260e-2 & 0.886  \\
  \hline&&&&\\[-12pt]
$1/32$  & 2.629e-2  & 0.687 & 1.823e-2 & 0.838  \\
  \hline&&&&\\[-12pt]
$1/64$  & 1.647e-2  & 0.674 & 1.049e-2 & 0.797  \\
 \hline&&&&\\[-12pt]
$1/128$ & 1.035e-2  & 0.670 & 6.191e-3 & 0.761  \\ \hline
  \end{tabular}
\label{table:CDLShapeRate}
\end{table}
\goodbreak
\subsection{Convergence of Multigrid Methods}\label{subsec:MGNumerics}
 In this set of experiments we carry out the symmetric $W$-cycle and $V$-cycle algorithms with
 $m$ pre-smoothing and $m$ post-smoothing steps
 for the Darcy system \eqref{eq:DarcySystem} and the convection-diffusion equation \eqref{eq:CD}.
 We use the multigrid $V(4,4)$
 algorithm for interior penalty methods to generate the preconditioner $L_k$ in \eqref{eq:LkDef}--\eqref{eq:bSkDef}.
 We report the contraction numbers obtained by computing the largest eigenvalue of
 the error propagation operators.  The mesh size at level $k$ is $2^{-k}$.
\subsubsection{Unit Square}\label{subsubsec:MGSquare}
 The contraction numbers of the $W$-cycle algorithms for \eqref{eq:DarcySystem} and
 \eqref{eq:CD} for various $m$ and $k=1,\ldots,6$
 are presented in Table~\ref{table:WCycleDarcySquare} and
 Table~\ref{table:WCycleCDSquare}.
 For both problems the asymptotic decay rate of $1/m$ for the contraction number
 predicted by Theorem~\ref{thm:WCycle} and Theorem~\ref{thm:WCycle2} is observed.
 (The index of elliptic regularity $\alpha$ for the unit square is $1$.)
 This is also confirmed by the log-log graph in Figure~\ref{fig:WCycleDarcySquare}, where
 the contraction number of the $W$-cycle algorithm for \eqref{eq:DarcySystem} is
 plotted against the number of smooth steps $m$.
\begin{table}[!ht]
\caption{Contraction numbers of the $W$-cycle multigrid method for \eqref{eq:DarcySystem} on the unit square}
\centering
  \begin{tabular}{c|c|c|c|c|c|c}
    \hline
 & $k=1$  & $k=2$  & $k=3$  & $k=4$  & $k=5$  & $k=6$ \\ \hline
$m=10$ & 0.80  & 0.81  & 0.81  & 0.81  & 0.81  & 0.81 \\ \hline
$m=20$ & 0.66  & 0.67  & 0.67  & 0.67  & 0.67  & 0.67 \\ \hline
$m=40$ & 0.47  & 0.48  & 0.48  & 0.48  & 0.48  & 0.48 \\ \hline
$m=80$ & 0.24  & 0.24  & 0.24  & 0.24  & 0.24  & 0.24 \\ \hline
  \end{tabular}
\label{table:WCycleDarcySquare}
\end{table}
\begin{table}[!ht]
\caption{Contraction numbers of the $W$-cycle multigrid method for \eqref{eq:CD} on the unit square}
\centering
  \begin{tabular}{c|c|c|c|c|c|c}
    \hline
 & $k=1$  & $k=2$  & $k=3$  & $k=4$  & $k=5$  & $k=6$ \\ \hline
$m=10$ & 0.80  & 0.81  & 0.81  & 0.81  & 0.81  & 0.81 \\ \hline
$m=20$ & 0.67  & 0.68  & 0.67  & 0.67  & 0.67  & 0.67 \\ \hline
$m=40$ & 0.48  & 0.48  & 0.48  & 0.48  & 0.48  & 0.48 \\ \hline
$m=80$ & 0.24  & 0.24  & 0.24  & 0.24  & 0.24  & 0.25 \\ \hline
  \end{tabular}
\label{table:WCycleCDSquare}
\end{table}
\begin{figure}[!ht]
  \centering
 \includegraphics[scale=0.4]{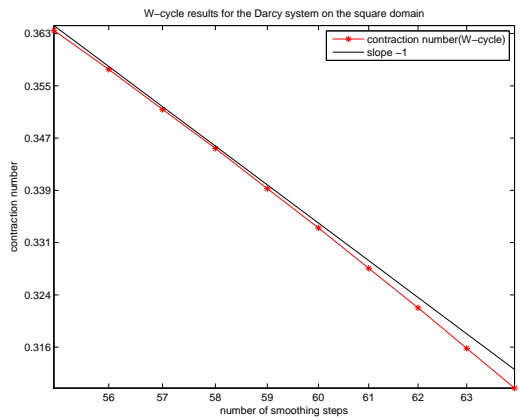}
  \caption{Contraction numbers of the $W$-cycle multigrid method for \eqref{eq:DarcySystem} on the unit square}
  \label{fig:WCycleDarcySquare}
\end{figure}
\par
 We also report the contraction numbers of the $V$-cycle algorithm for \eqref{eq:DarcySystem} and \eqref{eq:CD}
 in Table~\ref{table:VCycleDarcySquare} and Table~\ref{table:VCycleCDSquare}.
 They are similar to the contraction numbers for the $W$-cycle algorithm in
 Table~\ref{table:WCycleDarcySquare} and
 Table~\ref{table:WCycleCDSquare} but are slightly larger.
\begin{table}[!hh]
\caption{Contraction numbers of the $V$-cycle multigrid method for \eqref{eq:DarcySystem} on the unit square}
\centering
  \begin{tabular}{c|c|c|c|c|c|c}
    \hline
 & $k=1$  & $k=2$  & $k=3$  & $k=4$  & $k=5$  & $k=6$ \\ \hline
$m=10$ & 0.80  & 0.82  & 0.81  & 0.82  & 0.82  & 0.82 \\ \hline
$m=20$ & 0.66  & 0.68  & 0.68  & 0.68  & 0.68  & 0.68 \\ \hline
$m=40$ & 0.47  & 0.48  & 0.48  & 0.48  & 0.48  & 0.48 \\ \hline
$m=80$ & 0.24  & 0.25  & 0.25  & 0.25  & 0.25  & 0.25 \\ \hline
  \end{tabular}
\label{table:VCycleDarcySquare}
\end{table}
\begin{table}[!hh]
\caption{Contraction numbers of the $V$-cycle multigrid method for \eqref{eq:CD} on the unit square}
\centering
  \begin{tabular}{c|c|c|c|c|c|c}
    \hline
 & $k=1$  & $k=2$  & $k=3$  & $k=4$  & $k=5$  & $k=6$ \\ \hline
$m=10$ & 0.80  & 0.82  & 0.82  & 0.82  & 0.82  & 0.82 \\ \hline
$m=20$ & 0.67  & 0.68  & 0.68  & 0.68  & 0.68  & 0.68 \\ \hline
$m=40$ & 0.48  & 0.49  & 0.49  & 0.49  & 0.49  & 0.49 \\ \hline
$m=80$ & 0.24  & 0.25  & 0.25  & 0.25  & 0.25  & 0.25 \\ \hline
  \end{tabular}
\label{table:VCycleCDSquare}
\end{table}
\subsubsection{$L$-Shaped Domain}\label{subsubsec:MGLShape}
 The contraction numbers of the $W$-cycle algorithms for \eqref{eq:DarcySystem} and
 \eqref{eq:CD} for various $m$ and $k=1,\ldots,6$
 are displayed in Table~\ref{table:WCycleDarcyLShape} and
 Table~\ref{table:WCycleCDLShape}.  The contraction numbers are larger than the corresponding
 contraction numbers for the unit square, which is consistent with
 Theorem~\ref{thm:WCycle} and Theorem~\ref{thm:WCycle2} since the index of elliptic regularity $\alpha$
 for the $L$-shaped domain is less than $\frac23$.  This is also confirmed by the log-log graph in
 Figure~\ref{fig:WCycleDarcyLShape}, where the contraction number of the $W$-cycle algorithm for
 \eqref{eq:DarcySystem} is plotted against the number of smoothing steps $m$.
\par
 The contraction numbers for the $V$-cycle algorithm are similar and therefore not reported.
\begin{table}[!hh]
\caption{Contraction numbers of the $W$-cycle multigrid method for \eqref{eq:DarcySystem} on the $L$-shaped
 domain}
\centering
  \begin{tabular}{c|c|c|c|c|c|c}
    \hline
 & $k=1$  & $k=2$  & $k=3$  & $k=4$  & $k=5$  & $k=6$ \\ \hline
$m=10$ & 0.81  & 0.82  & 0.82  & 0.82  & 0.82  & 0.82 \\ \hline
$m=20$ & 0.70  & 0.70  & 0.70  & 0.70  & 0.70  & 0.70 \\ \hline
$m=40$ & 0.51  & 0.51  & 0.51  & 0.51  & 0.51  & 0.51 \\ \hline
$m=80$ & 0.28  & 0.28  & 0.28  & 0.28  & 0.28  & 0.28\\ \hline
  \end{tabular}
\label{table:WCycleDarcyLShape}
\end{table}
\begin{table}[!hh]
\caption{Contraction numbers of the $W$-cycle multigrid method for \eqref{eq:CD} on the $L$-shaped
 domain}
\centering
  \begin{tabular}{c|c|c|c|c|c|c}
    \hline
 & $k=1$  & $k=2$  & $k=3$  & $k=4$  & $k=5$  & $k=6$ \\ \hline
$m=10$ & 0.81  & 0.82  & 0.82  & 0.82  & 0.82  & 0.82 \\ \hline
$m=20$ & 0.70  & 0.70  & 0.70  & 0.70  & 0.70  & 0.70 \\ \hline
$m=40$ & 0.52  & 0.52  & 0.52  & 0.52  & 0.52  & 0.52 \\ \hline
$m=80$ & 0.29  & 0.29  & 0.29  & 0.29  & 0.29  & 0.29\\ \hline
  \end{tabular}
\label{table:WCycleCDLShape}
\end{table}
\begin{figure}[!hh]
  \centering
 \includegraphics[scale=0.4]{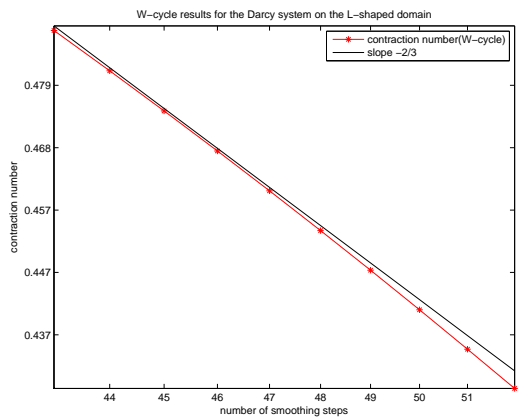}
  \caption{Contraction numbers of the $W$-cycle multigrid method for \eqref{eq:DarcySystem} on the $L$-shaped domain}
  \label{fig:WCycleDarcyLShape}
\end{figure}
%
\section{Concluding Remarks}\label{sec:Conclusion}
 In this paper we developed multigrid algorithms for the Darcy system discretized by
  Raviart-Thomas-N\'ed\'elec mixed finite element methods
 of order at least 1, and showed that with minimal modifications the multigrid algorithms can
 also be applied to convection-diffusion-reaction and advection-diffusion-reaction problems.
 Note that the number of degrees of freedom of the Raviart-Thomas-N\'ed\'elec mixed finite element method
 of order 1 associated with a triangulation $\cT_h$ is less than the number of degrees of freedom of
 the Raviart-Thomas-N\'ed\'elec mixed finite element method of order 0 associated with the triangulation
 $\cT_{h/2}$ obtained from $\cT_h$ by uniform refinement.  Therefore, from the point of view of multigrid,
 the requirement that the order of the method has to be at least 1 is not restrictive.
\par
 The results in this paper can be extended to rectangular Raviart-Thomas-N\'ed\'elec mixed finite element methods
  and to other stable mixed finite element methods for the Darcy system.  It should also be possible to extend
  our approach to mixed finite element methods for linear elasticity that are based on a stress-displacement formulation
  (cf. \cite{BBF:2013:Mixed} and the references therein).  We note that a nonstandard analysis similar to
  the one in Section~\ref{sec:Analysis} has been carried out in \cite{Stenberg:1988:Elasticity} for a family of
  such finite element methods.
\par
  Finally it would be interesting to extend our approach to the upwind mixed finite element methods
  for convection dominated problems  in \cite{Jaffre:1984:CD} (cf. also \cite{KP:2008:CD}).
%
\par\smallskip\noindent
{\bf Acknowledgement} \quad The authors would like to thank Monique Dauge for helpful discussions
 on the regularity of the Darcy system.

\end{document}